\newtheorem{theorem}{Theorem}[section]
\newtheorem{proposition}[theorem]{Proposition}
\newtheorem{definition}[theorem]{Definition}
\newtheorem{lemma}[theorem]{Lemma}
\theoremstyle{remark}
\newtheorem{remark}[theorem]{Remark}
\numberwithin{equation}{section}
\newcommand{\calC}{ C}
\newcommand{\calM}{\mathscr{M}}
\newcommand{\R}{\mathbb{R}}
\newcommand{\PP}{\mathbb{P}}
\newcommand{\calTT}{\mathcal{T}}
\newcommand\abs[1]{\left|#1\right|}
\newcommand{\tend}[2]{\displaystyle\mathop{\longrightarrow}_{#1\rightarrow#2}}
\newcommand\dom{d_{\partial \Omega}}
\DeclareMathOperator{\diam}{diam}
\def\widthfig{3.8}
\def\widthvoronoi{4.}
\title{Numerical computation of the cut locus via a variational approximation of the distance function}
\author{François Générau, Edouard Oudet, Bozhidar Velichkov}
\begin{document}
\maketitle
\begin{abstract}
  We propose a new method for the numerical computation of the cut locus of a compact submanifold of $\R^3$ without boundary. This method is based on a convex variational problem with conic constraints, with proven convergence. We illustrate the versatility of our approach by the approximation of Voronoi cells on embedded surfaces of $\R^3$.
\end{abstract}
\section{Introduction}

Let $S$ be a real analytic surface without boundary embedded in $\R^3$, and $b\in S$ any point of $S$ (that can be thought of as a base point).
\begin{definition}
  The \emph{cut locus} of $b$ in $S$ can be defined as the closure of the set of points $p\in S$ such that there exists at least two minimizing geodesics between $p$ and $b$. We will denote it by $Cut_b(S)$. Equivalently, it is also the set of points around which the distance function to the point $b$ - denoted by $d_b$ - is not smooth.
\end{definition}
\noindent The cut locus is a fundamental object in Riemannian geometry, and it is a natural problem to try and find ways to compute it numerically. In this paper, we propose a numerical approximation of $Cut_b(S)$, based on a convex variational problem on $S$, with proven convergence. It is not trivial to compute $Cut_b(S)$ because it is not stable with respect to $C^1$-small variations of $S$.
See for instance \cite[Example 2]{albano2016stability}. For instance, one can't approximate the cut locus of $S$ with the cut locus of a piecewise linear approximation of $S$.

\textbf{Related works.} Let us review the techniques used in the past by different authors to approximate the cut locus. We may divide them in two categories.

\emph{Geodesic approximation on parametrized surfaces.} This approach was used in \cite{sinclair2002loki} and \cite{misztal2011cut}. In \cite{sinclair2002loki}, on genus $1$ parametrized surfaces, the authors computed a degree $4$ polynomial approximation of the exponential map using the geodesic equation, and deduced an approximation of the cut locus from there. In \cite{misztal2011cut}, the authors used the deformable simplicial complexes (DSC) method and finite differences techniques for geodesic computations, to compute geodesic circles of increasing radius and their self intersection, \textit{i.e.} the cut locus.
They apply the method to genus $1$ surfaces. These papers contain no proof of convergence of the computed cut locus.

\emph{Exact geodesic computation on discretized surfaces.} This approach was used in \cite{itoh2004thaw} and \cite{dey2009cut}. In \cite{itoh2004thaw}, the authors computed the geodesics on a convex triangulated surface. They deduced an approximation of the cut locus of the triangulated surface, and filtered it according to the angle formed by the geodesics meeting at a point of the approximated cut locus, to make their approximation stable. They applied the method to ellipsoids. There is no proof of convergence. In \cite{dey2009cut}, the authors computed shortest curves on a graph obtained from a sufficiently dense sample of points of a surface. From there they deduced an approximation of the cut locus, and filtered it according to the maximal distance (called \emph{spread}) between the geodesics meeting at a point of the approximated cut locus. They proved that the set they compute converges to the cut locus (see \cite[Theorem 4.1]{dey2009cut}).

We may also mention \cite{bonnard2014geometric_cutlocus}, where the authors use some more geometric tools to compute (numerically) the cut locus of an ellipsoid, or a sphere with some particular metric with singularities.

\textbf{Our method}. The strategy we use is quite different. Given $m>0$ a constant, let $u_m$ be the minimizer of the following variational problem
\begin{equation}\label{eq:gradient constraint}
  \min_{\substack{u\in H^1(S)\\ \abs{\nabla_{_S} u} \leq 1 \\u(b)=0}} \int_{S} \left( \abs{\nabla_{_S} u}^2 - mu\right),
\end{equation}
where $\nabla_{_S}$ denotes the gradient operator on the surface $S$.
For $\lambda>0$ to be chosen small, we will use the set $E_{m,\lambda} := \left\{ \abs{\nabla_{_S} u_m}^2 \leq 1 - \frac{\lambda^2}{u_m^2}\right\}$ as an approximation of $Cut_b(S)$.
This is justified by some theoretical results regarding problem \eqref{eq:gradient constraint} obtained in \cite{generauCutLocusCompact2020}, which will be summarized in section \ref{sec:variational problem}. Now the set $E_{m,\lambda}$ can be well approximated using finite elements on a triangulation of the surface $S$.

The rest of the paper is organized as follows. In section \ref{sec:lambda medial axis}, we recall the notion of $\lambda$-medial axis that was introduced in \cite{chazal_lieutier_lambda}, and summarize some of its properties.
In section \ref{sec:lambda cut locus}, following the strategy of the $\lambda$-medial axis, we define a "$\lambda$-cut locus" $Cut_b(S)_{\lambda}$ and show that it can be used as an approximation of the complete cut locus.
In section \ref{sec:variational problem}, we recall the result from \cite{generauCutLocusCompact2020} which states that the set $E_{m,\lambda}$ defined above is a good approximation of $Cut_b(S)_{\lambda}$ if $m$ is big enough.
In section \ref{sec:discretization}, we discretize problem \eqref{eq:gradient constraint} using finite elements, to find a discrete minimizer $v_h$, where $h>0$ is the step of the dicretization. We show that the set
\begin{equation*}
  E_{m,\lambda,h} := \left\{x\in S \setminus\{b\} : \abs{\nabla_{_S} u_{m,h}^l(x)}^2 \leq 1 - \frac{\lambda^2}{(u_{m,h}^l)^2(x)}\right\},
\end{equation*}
is a good approximation of $E_{m,\lambda}$ as $h\to 0$.
In section \ref{sec:numerical}, we present the results of some numerical experiments.


\section{$\lambda$-Medial axis}\label{sec:lambda medial axis}
In this section, we recall briefly the notion of $\lambda$-medial axis introduced by Chazal and Lieutier in \cite{chazal_lieutier_lambda}.
Given an open subset $\Omega$ of $\R^2$, its medial axis $\mathcal{M}(\Omega)$ is defined as the set of points of $\Omega$ that have at least two closest points on the boundary $\partial \Omega$ of $\Omega$:
\[ \mathcal{M}(\Omega) := \big\{ x \in \Omega : \exists y,z \in \partial \Omega, \; y \neq z \; \text{and} \; \dom(x) = \abs{x-y} = \abs{x-z} \big\}, \]
where for any $x\in\Omega$, $\dom(x)$ is the distance from $x$ to the boundary $\partial\Omega$,
\[d_{\partial\Omega}(x)=\min\big\{|x-y|\ :\ y\in\partial\Omega\big\}.\]
The medial axis $\mathcal{M}(\Omega)$ is unstable with respect to small non-smooth perturbations of the boundary of $\Omega$. To deal with this issue, Chazal and Lieutier defined the so called {\it$\lambda$-medial axis} of $\Omega$ by setting, for any $\lambda>0$,
\begin{equation}
  \mathcal{M}_\lambda (\Omega):= \{x\in\Omega : r(x)\geq\lambda\},
\end{equation}
where $r(x)$ is the radius of the smallest ball containing the set of all closest points to $x$ on $\partial \Omega$, \textit{i.e.} the set $\{z\in \partial \Omega : \abs{x-z} = \dom(x) \}$. The map $\lambda \mapsto \mathcal{M}_\lambda (\Omega)$ is non increasing, and
\begin{equation*}
  \mathcal{M}(\Omega) = \bigcup_{\lambda>0}\mathcal{M}_\lambda (\Omega).
\end{equation*}
It is further proved in \cite[section 3, theorem 2]{chazal_lieutier_lambda} that $\calM_\lambda (\Omega)$ has the same homotopy type as $\calM (\Omega)$, for $\lambda$ small enough. These facts justify that $\calM_\lambda (\Omega)$ is a good approximation of $\calM(\Omega)$, for $\lambda$ small enough.
The crucial  difference though is that $\calM_\lambda (\Omega)$ is stable with respect to small variations of $\Omega$, whereas $\calM(\Omega)$ is not. We refer the reader to \cite[section 4]{chazal_lieutier_lambda} for precise statements and proofs.


\section{$\lambda$-Cut locus}\label{sec:lambda cut locus}
We want to define a set similar to the $\lambda$-medial axis, in the case of the cut locus $Cut_b(S)$. To this end, we note that, as it can be seen from \cite[section 2.1]{chazal_lieutier_lambda}, we have
\begin{equation}\label{eq:equivalentdef}
  \mathcal{M}_\lambda(\Omega) = \left\{x\in \Omega : \abs{\nabla \dom(x)}^2 \leq 1 - \frac{\lambda^2}{\dom^2(x)}\right\},
\end{equation}
where $\nabla \dom$ denotes the generalized gradient wherever $\dom$ is not differentiable. Analogously, for $\lambda>0$, we define the $\lambda$-cut locus as
\begin{equation*}
  Cut_b(S)_\lambda := \left\{x\in S \setminus \{b\} : \abs{\nabla_{_S} d_b(x)}^2 \leq 1 - \frac{\lambda^2}{d_b^2(x)}\right\}.
\end{equation*}
Note that, according to \cite[Proposition 3.4]{mantegazza_mennucci_2003}, the function $d_b$ is locally semiconcave on $S\setminus \{b\}$, so it has a generalized gradient everywhere on $S\setminus \{b\}$, whose norm is given by the following formula:
\begin{equation}
  \abs{\nabla_{_S} d_b}(x)=\max(0, \sup\limits_{v\in T_xS, \abs{v}=1} \partial^+_v d_b(x)).
\end{equation}
We have the following proposition from \cite[Proposition 2.9]{generauCutLocusCompact2020}.
\begin{proposition}\label{prop:increasing union}
  The map $\lambda \mapsto Cut_b(S)_\lambda$ is non increasing, and
  \begin{equation*}
    Cut_b(S) = \overline{\bigcup_{\lambda>0}Cut_b(S)_\lambda}.
  \end{equation*}
\end{proposition}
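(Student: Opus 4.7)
The monotonicity is immediate: for $0<\lambda_1\leq\lambda_2$ the pointwise inequality $1-\lambda_2^2/d_b^2 \leq 1-\lambda_1^2/d_b^2$ holds on $S\setminus\{b\}$, so the defining condition of $Cut_b(S)_{\lambda_2}$ implies that of $Cut_b(S)_{\lambda_1}$ and $Cut_b(S)_{\lambda_2}\subset Cut_b(S)_{\lambda_1}$. For the equality of sets, my first move is to rewrite the union in a more transparent form: since $d_b(x)>0$ for every $x\neq b$, membership in $Cut_b(S)_\lambda$ for some $\lambda>0$ is equivalent to $|\nabla_{_S} d_b(x)|<1$, giving
\[ \bigcup_{\lambda>0}Cut_b(S)_\lambda = \{x\in S\setminus\{b\} : |\nabla_{_S} d_b(x)|<1\}. \]
It then suffices to show that the closure of the right-hand side coincides with $Cut_b(S)$.

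For the inclusion $\overline{\bigcup_\lambda Cut_b(S)_\lambda}\subset Cut_b(S)$, I would argue by contraposition: at any point $x\in S\setminus\{b\}$ where $d_b$ is smooth, the superdifferential reduces to the singleton $\{\nabla_{_S} d_b(x)\}$, and since $d_b\circ\gamma(t)=t$ along any unit-speed minimizing geodesic through $x$ one has $|\nabla_{_S} d_b(x)|=1$. Plugging this singleton into the formula for $|\nabla_{_S} d_b|$ recalled in the text yields $|\nabla_{_S} d_b(x)|=1$. Hence every point of $\{|\nabla_{_S} d_b|<1\}$ is a non-smoothness point of $d_b$ and therefore lies in $Cut_b(S)$; since $Cut_b(S)$ is closed, the inclusion propagates to the closure.

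For the reverse inclusion, I would take $x\in Cut_b(S)$ and use the definition of the cut locus to produce a sequence $x_k\to x$ such that $b$ and $x_k$ are joined by at least two distinct minimizing geodesics. The unit tangent vectors $w_1^{(k)},w_2^{(k)}\in T_{x_k}S$ at $x_k$ of these two geodesics (oriented in the direction of increasing $d_b$) both lie in $\partial^+ d_b(x_k)$ as limits of gradients along the smooth part of the geodesics; by convexity, their midpoint also belongs to $\partial^+ d_b(x_k)$, and has norm $\cos(\theta_k/2)<1$ where $\theta_k>0$ is the angle between the two tangents. Invoking the identification $|\nabla_{_S} d_b|(x)=d(0,\partial^+ d_b(x))$ that holds for semiconcave functions, one concludes $|\nabla_{_S} d_b(x_k)|<1$, so each $x_k$ belongs to $\bigcup_{\lambda>0}Cut_b(S)_\lambda$ and $x$ lies in its closure.

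The delicate step is the identification $|\nabla_{_S} d_b|(x)=d(0,\partial^+ d_b(x))$, which bridges the formula given in the text with the convex-geometric picture used in both inclusions. It follows from the semiconcavity of $d_b$ (recalled from \cite{mantegazza_mennucci_2003}), together with the fact that the one-sided directional derivative of a semiconcave function equals $\inf_{p\in\partial^+ d_b(x)}\langle p,v\rangle$ and the elementary minimax computation $\sup_{|v|=1}\inf_{p\in K}\langle p,v\rangle = d(0,K)$ valid for any compact convex set $K$. Once this identification is granted, the two inclusions reduce to the convex-analytic manipulations sketched above.
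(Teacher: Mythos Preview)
The paper does not prove this proposition itself; it cites \cite[Proposition~2.9]{generauCutLocusCompact2020}. Your argument is correct and self-contained. One minor correction: the minimax identity $\sup_{|v|=1}\inf_{p\in K}\langle p,v\rangle = d(0,K)$ is false in general when $0\in K$ (e.g.\ for $K$ the closed unit disk the left side is $-1$ and the right side is $0$); it is precisely the $\max(0,\cdot)$ in the paper's definition of $|\nabla_{_S} d_b|$ that repairs this. The identification $|\nabla_{_S} d_b|(x)=d(0,\partial^+ d_b(x))$ that you actually invoke is nonetheless correct for semiconcave functions, so nothing in your argument is affected.

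It is also worth observing that the key estimate behind your reverse inclusion---namely $|\nabla_{_S} d_b|(x)\le\cos(\theta/2)$ whenever two minimizing geodesics from $b$ meet at $x$ with angle $\theta$---is exactly Lemma~\ref{lemma:gradient angle}, which the paper proves by bounding each one-sided directional derivative $\partial_v^+ d_b(x)$ via the triangle inequality and a smooth comparison distance $d_{x_i}$, without passing through the superdifferential. Your convex-analytic route (identify $w_1,w_2\in\partial^+ d_b(x)$, take their midpoint, use $|\nabla_{_S} d_b|=d(0,\partial^+ d_b)$) and the paper's first-variation route are equivalent here; yours is cleaner once the identification is granted, while the paper's avoids appealing to the structure of $\partial^+ d_b$.
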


\noindent In addition, the following proposition holds.
\begin{proposition}\label{prop:homotopy type}
  If $S$ is a real analytic surface, then for $\lambda>0$ small enough, one of the connected component of $Cut_b(S)_\lambda$ has the same homotopy type as $Cut_b(S)$, while the other connected components, if any, are contractible.
\end{proposition}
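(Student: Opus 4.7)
The plan is to exploit the real-analytic structure of $S$ to reduce the statement to a combinatorial fact about finite graphs, combined with the local behaviour of $d_b$ near the cut locus.

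\emph{Step 1: Structure of the cut locus.} First I would invoke the classical structure theorem for the cut locus on a real analytic compact surface (due in various forms to Myers and Buchner): $Cut_b(S)$ is a finite embedded graph $G\subset S$ whose vertices split into \emph{branch vertices} of degree $\ge 3$ (points where at least three minimizing geodesics from $b$ meet) and \emph{endpoints} of degree $1$ (first conjugate points of $b$). On each open edge of $G$ exactly two minimizing geodesics from $b$ arrive, and locally $d_b = \min(f_1,f_2)$ for two smooth branches of the distance function.

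\emph{Step 2: Computing $|\nabla_{_S} d_b|$ on the cut locus.} Away from $Cut_b(S)$ the function $d_b$ is smooth with $|\nabla_{_S} d_b|\equiv 1$, so the defining inequality of $Cut_b(S)_\lambda$ is never satisfied; hence $Cut_b(S)_\lambda \subseteq Cut_b(S)$. On an open edge of $G$, the formula for $|\nabla_{_S} d_b|$ recalled just before Proposition~\ref{prop:increasing union}, applied to $d_b=\min(f_1,f_2)$ with $|\nabla f_i|=1$, yields
\[ |\nabla_{_S} d_b|(x) \;=\; \cos(\theta(x)/2), \]
where $\theta(x)$ is the angle between the two minimizing geodesics arriving at $x$. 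At a branch vertex the analogous minimax over the (at least three) incoming unit directions gives $|\nabla_{_S} d_b|<1$ uniformly. At an endpoint of $G$ the two geodesics become tangent, so $\theta(x)\to 0$ and $|\nabla_{_S} d_b|\to 1$ along the incoming edge.

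\emph{Step 3: Identifying $Cut_b(S)_\lambda$ as a subgraph.} The defining condition $|\nabla_{_S} d_b|^2 \le 1-\lambda^2/d_b^2$ rewrites on an edge as $\sin(\theta(x)/2) \ge \lambda/d_b(x)$. Since $\theta$ is continuous and positive on open edges, vanishes only at the conjugate endpoints, and $d_b$ is bounded on $S$, this fails only on short sub-arcs that shrink to each degree-$1$ vertex as $\lambda\to 0$. Consequently, for all $\lambda$ below some $\lambda_0>0$, $Cut_b(S)_\lambda$ is exactly $G$ with one open half-arc removed at each endpoint and nothing else.

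\emph{Step 4: Homotopy equivalence.} Deleting a half-open pendant arc from a finite graph is a deformation retraction and preserves homotopy type. Hence the connected component $C_\lambda$ of $Cut_b(S)_\lambda$ containing all branch vertices has the same homotopy type as the corresponding component of $G$, which captures all the nontrivial loops of $G = Cut_b(S)$. Any remaining component of $Cut_b(S)_\lambda$ must come from a connected component of $G$ consisting of a single arc with two conjugate endpoints, with small half-arcs removed at both ends; what remains is an interval, hence contractible.

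\emph{Main obstacle.} The non-trivial analytic input is entirely concentrated in Step~1: one really needs the finite graph structure of $Cut_b(S)$ on a real analytic compact surface, and in particular the identification of its degree-$1$ vertices with first conjugate points of $b$, together with the fact that the angle $\theta$ between the two incoming minimizing geodesics tends to $0$ continuously as one approaches such a vertex. Given that structural result, the rest of the argument is the elementary graph-theoretic observation that shortening pendant edges does not change homotopy type.
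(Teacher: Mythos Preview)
Your argument is correct and reaches the same conclusion, but it follows a more direct route than the paper. The paper proves only the inequality $|\nabla_{_S} d_b|\le\cos(\theta/2)$ (Lemma~\ref{lemma:gradient angle}) and then argues indirectly: a point of $Cut_b(S)\setminus Cut_b(S)_\lambda$ must have all geodesic angles small, hence small \emph{spread}, hence is a tree point by \cite[Proposition~3.4]{dey2009cut}; the deformation retract is then obtained from the graph-theoretic statement extracted from \cite[Proposition~3.5]{dey2009cut}. You instead compute the exact value $|\nabla_{_S} d_b|=\cos(\theta/2)$ on open edges, use the finer structural input that the degree-$1$ vertices are first conjugate points (so $\theta\to 0$ there and nowhere else), and conclude by the elementary observation that trimming pendant arcs preserves homotopy type. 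Your approach avoids the detour through spread and the external references, at the price of relying on the sharper structure theorem for the real analytic cut locus; the paper's approach is more black-box but needs only the inequality and the coarse tree/cycle dichotomy.

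Two small points. First, in Step~3 you assert that the removed set is \emph{exactly one} half-arc at each degree-$1$ vertex; this would require monotonicity of $\sin(\theta/2)d_b$ near the endpoint, which you have not established. All you actually know (and all you need) is that for small $\lambda$ the removed set is contained in arbitrarily small neighbourhoods of the degree-$1$ vertices. Second, since $Cut_b(S)$ is connected, the extra contractible components in Step~4 cannot arise from other components of $G$; they can only come from the possible disconnectedness of the removed set near an endpoint just mentioned. Either way these extra pieces are sub-intervals of pendant edges, hence contractible, and the proposition follows.
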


These two propositions justify that $Cut_b(S)_\lambda$ is a good approximation of $Cut_b(S)$, for $\lambda>0$ small enough. Before proving proposition \ref{prop:homotopy type}, we prove the following lemma.
\begin{lemma}\label{lemma:gradient angle}
  Let $x\in Cut_b(S)$ be such that there exists two unit speed minimizing geodesics $\gamma_1, \gamma_2 : [0,d_b(x)] \to S$ such that $\gamma_i (0) = b$ and $\gamma_i (d_b(x)) = x$. Let $\theta \in(0,\pi)$ be the angle between $\gamma_1$ and $\gamma_2$ at $x$. Then, we have
  \begin{equation*}
    \abs{\nabla_{_S} d_b}(x)\leq \cos(\theta/2).
  \end{equation*}
\end{lemma}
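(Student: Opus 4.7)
The strategy is to bound the directional derivatives $\partial^+_v d_b(x)$ separately by each of the two terminal velocities, and then combine the bounds via elementary plane geometry. Write $L:=d_b(x)$ and set $v_i:=\dot\gamma_i(L)\in T_xS$; these are unit vectors with $\langle v_1,v_2\rangle_{g_x}=\cos\theta$, so $|v_1+v_2|_{g_x}=2\cos(\theta/2)$.

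The key step is the pointwise bound: for every unit $v\in T_xS$ and each $i\in\{1,2\}$, $\partial^+_v d_b(x)\leq\langle v_i,v\rangle_{g_x}$. I would prove this by the triangle inequality applied through the backward point $\gamma_i(L-s)=\exp_x(-sv_i)$, at which $d_b=L-s$:
\[
d_b(\exp_x(tv))\leq (L-s)+d\bigl(\exp_x(-sv_i),\exp_x(tv)\bigr),
\]
combined with the normal-coordinate expansion
\[
d\bigl(\exp_x(-sv_i),\exp_x(tv)\bigr)=\sqrt{s^2+2st\langle v_i,v\rangle+t^2}+O\bigl((s+t)^3\bigr).
\]
Setting $s=\alpha t$, dividing by $t$, and letting $t\to 0^+$ produces the bound
\[
\partial^+_v d_b(x)\leq -\alpha+\sqrt{\alpha^2+2\alpha\langle v_i,v\rangle+1},
\]
whose right side is nonincreasing in $\alpha>0$ and tends to $\langle v_i,v\rangle$ as $\alpha\to\infty$, giving the claim. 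Conceptually this is the standard fact that the terminal velocities $v_i$ of minimizing geodesics lie in the Fr\'echet superdifferential of the semiconcave function $d_b$ at $x$.

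Given the claim, the conclusion is elementary: for any unit $v$,
\[
\partial^+_v d_b(x)\leq \min\bigl(\langle v_1,v\rangle,\langle v_2,v\rangle\bigr)\leq \tfrac12\langle v_1+v_2,v\rangle\leq \tfrac12|v_1+v_2|=\cos(\theta/2)
\]
by Cauchy--Schwarz. Supremizing over unit $v$ and invoking the formula for $|\nabla_{_S} d_b|(x)$ given just above finishes the proof. The main care-point is the error in the normal-coordinate expansion: with $s=\alpha t$ and $\alpha$ fixed, the $O((s+t)^3)$ correction is $O(t^3)$, so dividing by $t$ and sending $t\to 0$ makes it vanish. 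Everything else reduces to a tidy exercise in calculus and inner-product geometry in $T_xS$.
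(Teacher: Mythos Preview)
Your proof is correct and follows the same overall strategy as the paper: establish the superdifferential bound $\partial^+_v d_b(x)\leq \langle \dot\gamma_i(L),v\rangle$ for each $i$, then use elementary angle geometry in $T_xS$.

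The only notable difference is in how the key bound is derived. You obtain it via the normal-coordinate expansion of $d(\exp_x(-sv_i),\exp_x(tv))$ and a two-step limit ($t\to 0$ for fixed $\alpha=s/t$, then $\alpha\to\infty$). The paper shortcuts this: it fixes a single point $x_i=\gamma_i(L-t_0)$ back along the geodesic, observes that $x\notin Cut_{x_i}(S)$ so that $d_{x_i}$ is smooth at $x$ with gradient $\dot\gamma_i(L)$, and then the triangle inequality $d_b\le d_b(x_i)+d_{x_i}$ (with equality at $x$) immediately yields $\partial^+_v d_b(x)\le \langle\dot\gamma_i(L),v\rangle$ without any expansion or limit in $\alpha$. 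This is a cleaner route to the same inequality. Your final step, bounding $\min(\langle v_1,v\rangle,\langle v_2,v\rangle)$ by $\tfrac12\langle v_1+v_2,v\rangle$ and then by $\tfrac12|v_1+v_2|=\cos(\theta/2)$, is equivalent to (and arguably tidier than) the paper's ``angle at most $\pi-\theta/2$'' argument.
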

\begin{proof}
  For $i = 1,2$, let us set $v_i = -\dot{\gamma_i}(d_b(x))$. Let $v \in T_xS$. Let us denote by $\exp_x$ the Riemannian exponential map at the point $x$. Let $t_0\in(0,d_b(x))$ and $x_i = \exp_{v_i t_0}$. Note that we have $x\notin Cut_{x_i}(S)$, so the function $d_{x_i}$ is smooth at $x$, and its gradient is $-v_i$.
  Given $v\in T_xS$ such that $\abs{v} = 1$, we have
  \begin{align}
    \partial^+_v d_b(x)
    &= \lim\limits_{t\to 0^+} \frac{d_b(\exp_x(vt))-d_b(x)}{t} \nonumber
    \\
    &\leq \lim\limits_{t\to 0^+} \frac{d_{x_i}(\exp_x(vt)) + d_b(x_i) - (d_{x_i}(x) + d_b(x_i))}{t} \nonumber
    \\
    & = \lim\limits_{t\to 0^+} \frac{d_{x_i}(\exp_x(vt)) - d_{x_i}(x)}{t} \nonumber
    \\
    &= -v \cdot v_i \nonumber
  \end{align}
  Given that the angle between $v_1$ and $v_2$ is $\theta$, there exists $i\in\{1,2\}$, such that the angle between $v$ and $v_i$ is at most $\pi - \theta/2$. Thus the last inequality gives $\partial^+_v d_b(x) \leq \cos(\theta/2)$. This concludes the proof.
\end{proof}

Using lemma \ref{lemma:gradient angle}, Proposition \ref{prop:homotopy type} will mainly be a consequence of \cite[Proposition 3.4]{dey2009cut} and the proof of \cite[Proposition 3.5]{dey2009cut}. Following \cite{dey2009cut}, we will use the following terminology.
A point $x$ of a finite graph $G$ is called a tree point if $G\setminus\{x\}$ has a connected component whose closure is a tree. Otherwise, $x$ is called a cycle point. It is a consequence of the proof of \cite[Proposition 3.5]{dey2009cut} that any closed connected subset of $G$ that contains all cycle points is a deformation retract of $G$.
\begin{proof}[Proof of proposition \ref{prop:homotopy type}]
  As $S$ is real analytic, the cut locus $Cut_b(S)$ is a finite graph (see \cite{myers1936connections} in dimension $2$, and \cite{buchner1977simplicial} for the generalization to arbitrary dimensions).
  According to the lemma \ref{lemma:gradient angle}, given any $\theta>0$, if $\lambda$ has been taken small enough, then for any point $x\in Cut_b(S)\setminus Cut_b(S)_\lambda$, the angle between the minimizing geodesics from $b$ to $x$ is smaller than $\theta$.
  Given two unit speed minimizing geodesics $\gamma_1$ and $\gamma_2$, following \cite{dey2009cut}, the \emph{spread} between $\gamma_1$ and $\gamma_2$ is defined as
  \begin{equation*}
    spd(\gamma_1,\gamma_2) = \sup\limits_t d(\gamma_1(t),\gamma_2(t)).
  \end{equation*}
  As geodesics verify a second order differential equation, if their angle at their common starting point is small, then their spread is also small. Therefore, applying \cite[Proposition 3.4]{dey2009cut}, we deduce that if $\lambda$ has been taken small enough, then any point $x\in Cut_b(S)\setminus Cut_b(S)_\lambda$ is a \emph{tree point} of $Cut_b(S)$. It remains to show that $Cut_b(S)_\lambda$ is closed to conclude that it is a deformation retract of $Cut_b(S)$ and conclude the proof. But this is a consequence of the fact that $d_b$ is semiconcave, and the upper semicontinuity of the generalized gradient of convex functions.
\end{proof}
Therefore, we will use $Cut_b(S)_\lambda$ as an approximation of $Cut_b(S)$ for $\lambda$ small enough.


\section{Approximation with a variational problem}\label{sec:variational problem}
For $m>0$, recall that $u_m$ is the minimizer in \eqref{eq:gradient constraint}. For $\lambda>0$, let us define the set $E_{m,\lambda}$ by
\begin{equation*}
  E_{m,\lambda} := \left\{x\in S \setminus\{b\} : \abs{\nabla_{_S} u_m(x)}^2 \leq 1 - \frac{\lambda^2}{u_m^2(x)}\right\}.
\end{equation*}
We have the following theorem (see \cite[Theorem 1.1 and Theorem 1.3]{generauCutLocusCompact2020}):
\begin{theorem}\label{thm:big theorem}
  For any $m>0$, the function $u_m$ is locally $C^{1,1}$ on $S\setminus \{b\}$. For any $m>m'>m_0$,
  \begin{equation}\label{eq:double inclusion}
    Cut_b(S) \subset \{\abs{\nabla_{_S} u_m} < 1 \} \subset \{\abs{\nabla_{_S} u_{m'}} < 1 \}.
  \end{equation}
  Moreover,
  \begin{equation}\label{eq:convergence elastic set}
     \{\abs{\nabla_{_S} u_m} < 1 \}\tend{m}{+\infty}Cut_b(S) \quad \text{in the Hausdorff sense.}
  \end{equation}
  Finally, for any $\varepsilon>0$,
  \begin{equation}
    \sup\limits_{x \in E_{m,\lambda}} d(x,Cut_b(S)_{\lambda}) \tend{m}{+\infty}0, \label{eq:convergence lambda cut locus}
    \quad \text{and} \quad  \sup\limits_{x \in Cut_b(S)_{\lambda+\varepsilon}} d(x,E_{m,\lambda}) \tend{m}{+\infty}0.
  \end{equation}
\end{theorem}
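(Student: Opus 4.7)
The plan is to exploit the classical equivalence between a gradient-constraint problem of this type and an obstacle problem with obstacle $d_b$. Any admissible $u$ is $1$-Lipschitz and vanishes at $b$, hence $|u|\leq d_b$; a truncation at zero shows that the minimizer satisfies $0 \leq u_m \leq d_b$. The cornerstone of the argument, in the spirit of the classical elastic-plastic torsion theorems transposed to a Riemannian surface, is that $u_m$ is also the minimizer of $\int_S(|\nabla_{_S} u|^2-mu)$ under only the obstacle constraint $u\leq d_b$, $u(b)=0$, and that the saturation set and the coincidence set coincide:
\begin{equation*}
  \{|\nabla_{_S} u_m| = 1\} = \{u_m = d_b\}.
\end{equation*}

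Granting this identity, the $C^{1,1}$ regularity of $u_m$ on $S\setminus\{b\}$ is provided by the standard $C^{1,1}$ theory for obstacle problems with $C^{1,1}$ obstacle: outside $Cut_b(S)\cup\{b\}$ the obstacle $d_b$ is real analytic, and on a suitable neighborhood $u_m$ satisfies $-\Delta_{_S} u_m = m/2$ under $u_m\leq d_b$, so it is locally $C^{1,1}$. Around a cut point $x_0$ I would argue that $u_m(x_0)<d_b(x_0)$: if equality held, the $C^{1,1}$ regularity of $u_m$ would force $\nabla_{_S} u_m(x_0)$ to lie inside the superdifferential $\partial^+ d_b(x_0)$, which at a cut point is not a singleton (cf. Lemma~\ref{lemma:gradient angle}), and inspecting $-\Delta_{_S} u_m = m/2$ in a punctured neighborhood yields a contradiction. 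This gives $Cut_b(S)\subset\{u_m<d_b\}=\{|\nabla_{_S} u_m|<1\}$, which is the first inclusion in \eqref{eq:double inclusion}. The monotone inclusion then follows from the standard comparison for obstacle problems: increasing $m$ pushes $u_m$ upward and enlarges the coincidence set, hence shrinks $\{|\nabla_{_S} u_m|<1\}$.

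For the Hausdorff convergences \eqref{eq:convergence elastic set} and \eqref{eq:convergence lambda cut locus}, I would fix $x_0\notin Cut_b(S)\cup\{b\}$, follow the unique minimizing geodesic from $b$ to $x_0$, and build an explicit barrier $v$ in a thin geodesic tube around this geodesic, with $v\leq d_b$, $-\Delta_{_S} v \leq m/2$ inside, and $v=d_b$ on its lateral boundary. As $m\to\infty$ one may take the tube thinner while $v$ still tends to $d_b$ uniformly on compact subsets, so by comparison $u_m(x_0)=d_b(x_0)$ for $m$ large enough. A uniform version of this construction on compacta at positive distance from $Cut_b(S)$ gives the Hausdorff convergence in \eqref{eq:convergence elastic set}, and combining the local uniform convergence of $|\nabla_{_S} u_m|$ to $|\nabla_{_S} d_b|$ off the cut locus with the definitions of $E_{m,\lambda}$ and $Cut_b(S)_\lambda$ then produces \eqref{eq:convergence lambda cut locus}.

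The main technical obstacle will be this barrier construction. Producing $v$ with the right one-sided $\Delta_{_S}$-estimate and quantitative proximity to $d_b$ on a sufficiently wide tube, all uniformly in $m$, is genuinely surface-geometric since the curvature of $S$ enters the Laplacian of $d_b$. It is this step that drives the uniform (Hausdorff) convergence and, in particular, the delicate second half of \eqref{eq:convergence lambda cut locus}: to obtain $Cut_b(S)_{\lambda+\varepsilon}\subset E_{m,\lambda}$ for $m$ large, one must propagate a quantitative gradient control on $u_m$ right up to a neighborhood of the cut locus.
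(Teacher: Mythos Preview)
The paper does not prove this theorem at all: it is quoted verbatim from the companion article \cite{generauCutLocusCompact2020} (Theorems~1.1 and~1.3 there), and no argument is given here beyond that citation. So there is no ``paper's own proof'' to compare your proposal against.

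For what it is worth, your outline is in the right spirit: the proof in \cite{generauCutLocusCompact2020} does rest on the equivalence with the obstacle problem $u\le d_b$ and the identification $\{|\nabla_{_S}u_m|=1\}=\{u_m=d_b\}$, and the $C^{1,1}$ regularity is indeed obtained via obstacle-problem theory away from the singularities of $d_b$. Two points in your sketch would need real work, and you flag them yourself. First, your argument that $u_m(x_0)<d_b(x_0)$ at a cut point $x_0$ is slightly circular as written, since you invoke $C^{1,1}$ regularity of $u_m$ at $x_0$ before having established it there; one has to argue more carefully using only the semiconcavity of $d_b$ and the variational inequality. Second, the barrier construction for the Hausdorff convergence and for the quantitative inclusion $Cut_b(S)_{\lambda+\varepsilon}\subset E_{m,\lambda}$ is, as you say, the substantive analytic step, and your proposal stops short of carrying it out. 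These are genuine gaps in an otherwise sound plan, but since the present paper simply imports the result, a full comparison is not possible here.
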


Therefore, we can use $E_{m,\lambda}$ as an approximation of $Cut_b(S)_\lambda$. All in all, we will use $E_{m,\lambda}$ as an approximation of $Cut_b(S)$.


\section{Discretization}\label{sec:discretization}

\subsection{Finite elements of order $r$ on a surface approximation of order $k$}

In this section we introduce a discretization framework adapted to variational problem
\eqref{eq:gradient constraint} based on finite elements. We follow the notations
 of \cite{demlow2009higher, dziuk2013finite}.

Let $S$ be a compact oriented smooth two-dimensional surface embedded in $\R^3$.
For $x \in S$, we denote by $\nu (x)$ the oriented normal vector field on $S$.
Let $d:\R^3 \rightarrow \R$ be the signed distance associated to $S$ and
$U_\eta = \{x\in \R^3, \, |d(x)| < \eta \} $ the tubular
neighborhood of $S$ of width $\eta >0$. It is well known that if $\eta$ is
small enough (for  instance
$0< \eta < \min_{i = 1,2} {\frac{1}{|\kappa_i|}}_{L_{\infty}(S)}$ where the $(\kappa_i)$
stand for the extremal sectional curvatures of S), then for every $x \in U_\eta$
it exists a unique $a(x) \in S$ such that

\begin{equation}
  \label{eq:proj}
  x = a(x) + d(x) \nu(a(x)) = a(x) + d(x) \nabla d(x).
\end{equation}
We consider $S_h^1$ a triangular approximation of $S$ whose vertices lie on $S$
and whose faces are quasi-uniform and shape regular of  diameter at most $h>0$.
Moreover, we will assume that $\calTT_h$, the set of triangular faces of $S_h$,
are contained in some tubular neighborhood $U_\eta$ such that  the map $a$
defined by  \eqref{eq:proj} is unique.

For $k \geq 2$ and for a triangle $T \in \calTT_h$, we consider
the $n_k$ Lagrange basis functions $\Phi_1^k,\dots \Phi_{n_k}^k $ of degree $k$
and define the discrete projection on $S_h$ by:

\begin{equation}
  \label{eq:projk}
  a_k(x) = \sum_{j=1}^{n_k} a(x_j)  \Phi_{j}^k(x)
\end{equation}
where $x_1,\dots,x_{n_k}$ are the nodal points associated to the basis functions.
Now we can define $S_h^k$ a polynomial approximation of order $k$ of $S$ associated
 to $\calTT_h$

\begin{equation}
  \label{eq:Spoly}
  S_h^k = \{ a_k(x),\, x\in S_h \}.
\end{equation}
Observe that by definition the image by $a$ of the nodal points are both on $S$
and on  $S_h^k$. Let us now introduce the finite element spaces on $S_h = S_h^1$
and $S_h^k$ for $k \geq 2$. For every integer $r\geq 1$, let

\begin{equation}
  L^r_h  = \{ \chi  \in \calC^0(S_h),\, \chi|_{T} \in \PP_r,
  \forall  T \in \calTT_h\}
\end{equation}
where $\PP_r$ is the family of polynomials of degree at most $r$. Analogously,
for $k \geq 2$ let

\begin{equation}
  L^{r,k}_h  = \{ \hat{\chi}  \in \calC^0(S_h^k),\, \hat{\chi} =
  \chi \circ  a_k^{-1}, \, \text{for some } \,  \chi \in L^r_h \}.
\end{equation}
Analogously to \eqref{eq:gradient constraint}, we define
\begin{equation} \label{eq:gradient constraint_h}
  \min_{\substack{u\in L_h^{r,k} \\ \abs{\nabla_{_{S_h^k}} u} \leq 1 \\u(b)=0}}
                   F_h^k(u)
\end{equation}
where $ F_h^k(u) = \int_{S_h^k} \left( \abs{\nabla_{_{S_h^k}} u}^2 - mu\right)$ and $b$
 some fixed nodal points of the mesh $\calTT_h$.

\subsection{Convergence of the lifted minimizers}
In order to prove the convergence of our numerical approach, let us first establish
that our discrete problem converges in values in the sense of proposition \ref{prop:values}.
For a function $u$ defined on $S_h^k$, we introduce its lifted  function $u^l$
onto $S$ defined by the relation $u^l(b) = u(x)$ for $b \in S$ where $x$
is the unique point  of $S_h^k$ which satisfies $a(x) = b$.

Below, we focus our analysis in the piecewise linear case $r = k = 1$ which contains
 all the  main ingredients of a proof for the general $(r,k)$ case.
 For every $h>0$, the convex  optimization problem \eqref{eq:gradient constraint_h}
 has a unique solution.
\begin{lemma}\label{lemma:differential of a}
  The differential of the projection $a$ onto $S$, when restricted to the tangent space of $S_h$, is the identity, up to order $2$ in $h$:
  \begin{equation*}
    Da_{|_{TS_h}} = Id + \mathcal{O}(h^2).
  \end{equation*}
  The second differential of $a$, when restricted to the tangent space of $S_h$, is null, up to order $1$ in $h$:
  \begin{equation*}
    D^2a_{|_{TS_h}} =  \mathcal{O}(h).
  \end{equation*}
\end{lemma}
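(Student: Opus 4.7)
The strategy is to differentiate the explicit formula $a(x)=x-d(x)\nabla d(x)$ twice, obtaining
\begin{equation*}
  Da(x) = I - \nabla d(x)\otimes\nabla d(x) - d(x)\,D^2 d(x),
\end{equation*}
and an analogous but longer expression for $D^2 a(x)$ (involving $d$, $\nabla d$, $D^2 d$ and $D^3 d$), and then to analyze each term using two geometric estimates on $S_h$.

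Those two estimates are: \emph{(i)} on each triangle of $S_h$, $|d(x)| = \mathcal{O}(h^2)$, obtained by Taylor expanding the smooth function $d$ at any of the three (on-surface) vertices of the triangle, which lie within a ball of radius $h$ of $x$; and \emph{(ii)} for $x$ in the interior of a triangle $T$, the chord-plane normal $\nu_{S_h}(x)$ and $\nabla d(x) = \nu(a(x))$ differ by $\mathcal{O}(h)$, obtained by writing $S$ locally as a $C^2$-graph over the tangent plane at a vertex of $T$ and comparing the normal of the chord to the normal of $S$.

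For the first claim, take $v\in T_xS_h$ and write
\begin{equation*}
  Da(x)(v) - v = -(v\cdot\nabla d(x))\,\nabla d(x) - d(x)\,D^2 d(x)(v).
\end{equation*}
The last term is $\mathcal{O}(h^2)|v|$ by (i). For the middle term, (ii) together with $v\perp \nu_{S_h}(x)$ gives $|v\cdot\nabla d(x)| = \mathcal{O}(h)|v|$; moreover $\nabla d(x)$ itself differs from $\nu_{S_h}(x)$ by $\mathcal{O}(h)$, so subtracting the component along $\nu_{S_h}(x)$ removes the leading order and leaves a tangential contribution of size $\mathcal{O}(h^2)|v|$. Identifying $T_xS_h$ with $T_{a(x)}S$ by orthogonal projection in $\R^3$ (itself an isometry up to $\mathcal{O}(h^2)$, since the two planes make angle $\mathcal{O}(h)$), $Da_{|TS_h}$ equals the identity to order $\mathcal{O}(h^2)$, which is the sense in which the lemma is needed for relating $\nabla_{S_h}u$ and $\nabla_S u^l$.

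For $D^2 a_{|TS_h} = \mathcal{O}(h)$, I apply the same termwise analysis to $D^2 a(x)(v,w)$. Every term carrying a factor of $d(x)$ or $(v\cdot\nabla d)$ is $\mathcal{O}(h^2)$ or $\mathcal{O}(h)|v||w|$ directly, by (i)--(ii). The remaining contribution is $-(v^TD^2d(x)\,w)\nabla d(x)$, a vector parallel to $\nabla d$; by (ii) this is nearly parallel to $\nu_{S_h}(x)$, so its tangential projection to $T_xS_h$ is $\mathcal{O}(h)|v||w|$, as required.

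The single step of real substance is (i)--(ii); these are classical bounds for piecewise-linear interpolation of a smooth surface on a shape-regular, quasi-uniform mesh, and are proven in \cite{demlow2009higher, dziuk2013finite}. Everything else in the proof is termwise algebra on the explicit formulas for $Da$ and $D^2 a$ together with the elementary observation that a vector parallel to $\nabla d$ is nearly normal to $S_h$.
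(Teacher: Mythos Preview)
Your proof is correct and follows essentially the same route as the paper's: both differentiate the explicit formula $a(x)=x-d(x)\nabla d(x)$ and then control the resulting terms via the standard surface-interpolation estimates $|d|=\mathcal{O}(h^2)$ and $|\nu_S-\nu_{S_h}|=\mathcal{O}(h)$ from \cite{dziuk2013finite}. The paper simply cites the relevant Dziuk--Elliott equations and the identity for $D^2a$, whereas you unpack the termwise analysis and make the tangential-projection interpretation (needed to make sense of ``$Da_{|TS_h}=Id$'') explicit.
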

\begin{proof}
  The identity estimate on $Da$ is a direct consequence of \cite[equations (4.12), (4.13) and (4.11)]{dziuk2013finite}, and the fact that, following the notations of \cite[lemma 4.1]{dziuk2013finite}, we have $\nu_{n+1}^2 = 1 - \sum\limits_{j\leq n }\nu_j^2$.
  The estimate on $D^2a$ follows from the same equations, plus the identity $D^2a(x) = -2\nabla d(x)D^2d(x)$.

\end{proof}

  Defining $ F(u) = \int_{S} \left( \abs{\nabla_{_S} u}^2 - mu\right)$, we have

\begin{proposition}
  \label{prop:values}
  Let $u_{m,h}$ be the
  solution of problem \eqref{eq:gradient constraint_h} for $k=r=1$. Let $Lu_{m,h}^l := \frac{u_{m,h}^l}{|\nabla_{_{S}} u_{m,h}^l|_{L_\infty(S)}}$ be the $1$-Lipschitz normalization of $u_{m,h}^l$. Then,  $Lu_{m,h}^l \in H^1(S)$ and
  $$F(Lu_{m,h}^l)  = \min_{\substack{u\in H^1(S)\\ \abs{\nabla_{_{S}} u} \leq 1 \\u(b)=0}}
  F(u)  + \mathcal{O}(h^{\frac{1}{2}}).$$
\end{proposition}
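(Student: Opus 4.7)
The plan is to establish matching bounds: a lower bound $F(Lu_{m,h}^l)\ge \min F$ from admissibility, and an upper bound $F(Lu_{m,h}^l) \le \min F + O(h^{1/2})$ obtained by testing the optimality of $u_{m,h}$ against a discrete competitor built from the continuous minimizer $u_m$ of \eqref{eq:gradient constraint}.

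\textbf{Admissibility and identification of the two energies.} Since $u_{m,h}$ is piecewise linear on $S_h$ and satisfies $|\nabla_{S_h} u_{m,h}|\le 1$ everywhere, the lift $u_{m,h}^l = u_{m,h}\circ a^{-1}$ is Lipschitz on $S$. Using that $Da_{|TS_h} = Id + \mathcal{O}(h^2)$ from Lemma \ref{lemma:differential of a}, a pointwise computation gives $\|\nabla_{_S} u_{m,h}^l\|_{L^\infty(S)}\le 1 + Ch^2$; after normalization, $Lu_{m,h}^l$ lies in $H^1(S)$, is $1$-Lipschitz, vanishes at $b$ (a node, so $u_{m,h}(b)=0$ implies $u_{m,h}^l(b)=0$), and is therefore admissible in \eqref{eq:gradient constraint}. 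The same lemma, combined with the change of variables $b=a(x)$ whose Jacobian equals $1 + \mathcal{O}(h^2)$, then yields $F(Lu_{m,h}^l) = F_h(u_{m,h}) + \mathcal{O}(h^2)$.

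\textbf{Discrete competitor and upper bound.} Define $\tilde v_h \in L_h^{1,1}$ as the piecewise linear Lagrange interpolant of $u_m \circ a$ at the vertices of $\calTT_h$, and set $\tilde u_h = \tilde v_h/(1+\delta_h)$, with $\delta_h>0$ chosen just large enough that $|\nabla_{S_h}\tilde u_h|\le 1$ on every triangle. Since $b$ is a node and $u_m(b)=0$, one has $\tilde u_h(b)=0$, so $\tilde u_h$ is admissible in \eqref{eq:gradient constraint_h}. By Theorem \ref{thm:big theorem}, $u_m$ is $1$-Lipschitz on $S$ and locally $C^{1,1}$ on $S\setminus\{b\}$: on triangles bounded away from $b$ the classical $W^{1,\infty}$-interpolation bounds give pointwise gradient errors of order $h$, whereas on the $\mathcal{O}(1)$ triangles touching $b$ only a crude Lipschitz bound is available, but their total area is $\mathcal{O}(h^2)$. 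Bookkeeping these contributions together with the perturbation induced by the $1+\delta_h$ rescaling gives $F_h(\tilde u_h) \le F(u_m) + \mathcal{O}(h^{1/2})$. Combining with $F_h(u_{m,h})\le F_h(\tilde u_h)$ and the identification above yields $F(Lu_{m,h}^l)\le \min F + \mathcal{O}(h^{1/2})$.

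\textbf{Main obstacle.} The critical difficulty is producing an admissible discrete competitor. Nodal Lagrange interpolation does not preserve the pointwise bound $|\nabla u|\le 1$: the interpolation inflates the gradient because values at vertices are compared across Euclidean edges of $S_h$ rather than along geodesic arcs of $S$, and because the gradient of $u_m$ is only bounded, not Lipschitz, near the singular point $b$. The global renormalization factor $1+\delta_h$ must absorb this worst-case violation, and optimizing its size against the interpolation error away from $b$ is what produces the conservative $\mathcal{O}(h^{1/2})$ rate rather than the $\mathcal{O}(h)$ that one would expect if $u_m$ were globally $C^{1,1}$.
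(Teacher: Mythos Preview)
Your overall strategy matches the paper's, but the construction of the discrete competitor has a genuine gap. You interpolate $u_m\circ a$ directly and then divide by $1+\delta_h$ with $\delta_h$ chosen to restore $|\nabla_{S_h}\tilde u_h|\le 1$ on \emph{every} triangle. The problem is that $\delta_h$ is a single global number governed by the worst triangle, and near $b$ that worst case is $O(1)$, not $o(1)$. Indeed, in a neighbourhood of $b$ one has $u_m=d_b$, whose Hessian behaves like $1/r$ at distance $r$ from $b$; hence on a triangle at distance $r\sim kh$ the $W^{1,\infty}$ interpolation error for the gradient is $O(h/r)=O(1/k)$, and on the triangles containing or adjacent to $b$ it is $O(1)$. (Concretely, interpolating $|x|$ on a shape-regular triangle with a vertex at the origin produces a linear function with gradient norm $1/\cos(\alpha/2)>1$, $\alpha$ being the vertex angle.) Thus $\delta_h\ge c>0$ independently of $h$, and the rescaling changes $F_h$ by $O(1)$, destroying the estimate. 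Your remark that the bad triangles have area $O(h^2)$ does not help: the gradient constraint is pointwise, so small area cannot compensate a large $L^\infty$ overshoot. Your dichotomy ``touching $b$'' vs.\ ``bounded away from $b$'' also leaves uncontrolled the whole annulus of triangles at distances comparable to $h$.

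The paper circumvents this by regularising \emph{before} interpolating: it replaces $u_m$ by
\[
u_{m,\varepsilon}(x)=\begin{cases} d_b(x)^2/(2\varepsilon) & \text{if } d_b(x)\le\varepsilon,\\ u_m(x)-\varepsilon/2 & \text{if } d_b(x)\ge\varepsilon,\end{cases}
\]
which is globally $C^{1,1}$ on $S$ with $\mathrm{Lip}(\nabla u_{m,\varepsilon})=O(\varepsilon^{-1})$ and still satisfies $|\nabla u_{m,\varepsilon}|\le 1$. Interpolating $u_{m,\varepsilon}$ then gives a \emph{uniform} gradient error $O(h/\varepsilon)$, so the normalisation factor is $1+O(h/\varepsilon)$ and $F_h(v_{h,\varepsilon})=F(u_{m,\varepsilon})+O(h/\varepsilon)$. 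Since $F(u_{m,\varepsilon})=F(u_m)+O(\varepsilon)$, balancing $\varepsilon=h^{1/2}$ yields the claimed $O(h^{1/2})$. In short, the $h^{1/2}$ rate does not come from ``small area near $b$'' but from trading an $\varepsilon$-regularisation cost against an $h/\varepsilon$ interpolation cost; your argument is missing this extra parameter.
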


\begin{proof}
    \emph{step 1.} Let $u_m$ be the solution of problem \eqref{eq:gradient constraint}. For $\varepsilon>0$, let $u_{m,\varepsilon}:S\to\R$ be defined by:
    \begin{equation*}
      u_{m,\varepsilon} =
      \begin{cases}
        \frac{d_b(x)^2}{2\varepsilon} \quad &\text{if} \quad d_b(x) \leq \varepsilon
        \\
        u_m(x) - \frac{\varepsilon}{2} \quad &\text{if} \quad d_b(x) \geq \varepsilon.
      \end{cases}
    \end{equation*}
    According to \cite[Lemma 3.3]{generauCutLocusCompact2020}, we have $u_m = d_b$ in a neighborhood of $b$. Therefore, for $\varepsilon>0$ small enough, we have $u_m = d_b$ on $B(b,2\varepsilon)$. In particular, we deduce that $u_{m,\varepsilon}$ is $C^1$ on $S$.
    As $d_b^2$ is smooth in a neighborhood of $b$, the gradient of $d_b^2/2\varepsilon$ is $\mathcal{O}(\varepsilon^{-1})$-Lipschitz on $B(b,\varepsilon)$. Moreover, as $u_m = d_b$ on $B(b,2\varepsilon)$, the gradient of $u_m$ is $\mathcal{O}(\varepsilon^{-1})$-Lipschitz on $B(b,2\varepsilon)\setminus B(b,\varepsilon)$.
    According to lemma \cite[Proposition 3.4]{generauCutLocusCompact2020}, $u_m$ is also locally $C^{1,1}$ on $S\setminus \{b\}$. Therefore its gradient is $\mathcal{O}(\varepsilon^{-1})$-Lipschitz on $S\setminus B(b,\varepsilon)$.
    All in all, we obtain that $u_{m,\varepsilon}$ is $C^{1,1}$ on $S$, and the Lipschitz constant of its gradient is $\mathcal{O}(\varepsilon^{-1})$. Furthermore, as $d_b$ and $u_m$ are both $1$-Lipschitz, we have $\abs{\nabla u_{m,\varepsilon}}\leq1$. Now for $\varepsilon>0$, consider
    \begin{equation*}
      v_{h,\varepsilon} := \frac{I_hu_{m,\varepsilon}}{|\nabla_{_{S_h}} I_hu_{m,\varepsilon}|_{L_\infty(S_h)}},
    \end{equation*}
    where $I_hu_{m,\varepsilon}$ is the $\PP^1$ Lagrange interpolation of $u_{m,\varepsilon}$ on $S_h$. For $x\in S_h$,
    observe that we have the relation $I_h u_{m,\varepsilon}(x) = I_h (u_{m,\varepsilon} \circ a)(x)$ which
    says that $I_h u_{m,\varepsilon}$ is the standard (flat) interpolation of the composed function
    $u_{m,\varepsilon} \circ a$. From lemma \ref{lemma:differential of a}, we know that on every triangle, the differential of $a$ is $\mathcal{O}(h)$-Lipschitz, and $a$ is $\mathcal{O}(1)$-Lipschitz.
    As the gradient of $u_{m,\varepsilon}$ is $\mathcal{O}(\varepsilon^{-1})$-Lipschitz, we deduce that on every triangle, the gradient of $u_m\circ a$ is $\mathcal{O}(\varepsilon^{-1})$-Lipschitz.
    By the quasi uniformity of the mesh, we obtain the uniform interpolation estimates on $S_{h}$:
    \begin{equation}\label{eq:interpolation estimate}
      I_h u_{m,\varepsilon}(x) = (u_{m,\varepsilon} \circ a)(x) + \mathcal{O}(\varepsilon^{-1}h^2)
    \end{equation}
    and
    \begin{equation*}
      \nabla_{_{S_h}} I_h u_{m,\varepsilon}(x) = \nabla_{_{S_h}} (u_{m,\varepsilon} \circ a)(x) + \mathcal{O}(\varepsilon^{-1}h).
    \end{equation*}
    With lemma \ref{lemma:differential of a}, we deduce for all $x\in S_{h}$,
    \begin{equation}\label{eq:interpolation gradient estimate}
      \nabla_{_{S_h}}  I_h u_{m,\varepsilon}(x) = \nabla_{_{S}} u_{m,\varepsilon}(a(x)) + \mathcal{O}(\varepsilon^{-1}h).
    \end{equation}
    Recall that we have $|\nabla_{_{S}} u_{m,\varepsilon}|_{L_\infty(S)} = 1$. Therefore the last identity yields
    $$|\nabla_{_{S_h}}  I_h u_{m,\varepsilon}|_{L_\infty(S_{h,\varepsilon})} = 1 +  \mathcal{O}(\varepsilon^{-1}h).$$
    Thus, $v_{h,\varepsilon} = I_h u_{m,\varepsilon}(1 + \mathcal{O}(\varepsilon^{-1}h))$, and so
    \begin{equation}\label{eq:F_h intermediate 1}
      F_h(v_{h,\varepsilon}) = F_h(I_h u_{m,\varepsilon}) +  \mathcal{O}(\varepsilon^{-1}h).
    \end{equation}
    Applying lemma \ref{lemma:differential of a} again, with a simple change of variable, we find that for any function $f:S_h \to \R$,
    \begin{equation*}
      \int_{S_h}f \circ a = \int_{S} f + \mathcal{O}(h^2).
    \end{equation*}
    Recalling \eqref{eq:interpolation estimate} and \eqref{eq:interpolation gradient estimate}, we obtain
    \begin{equation}\label{eq:F_h intermediate 2}
      F_h(I_h u_{m,\varepsilon}) = F(u_{m,\varepsilon}) +  \mathcal{O}(\varepsilon^{-1}h).
    \end{equation}
    Furthermore, we have
    \begin{equation*}
      \int_S\abs{u_{m,\varepsilon}-u_m} \leq \mathcal{O}(\varepsilon)\quad \text{and} \int_S \abs{\nabla u_{m,\varepsilon} - \nabla u_m}^2 \leq \mathcal{O}(\varepsilon^2),
    \end{equation*}
    so
    \begin{equation*}
      F(u_{m,\varepsilon}) = F(u_m) + \mathcal{O}(\varepsilon).
    \end{equation*}
    Combining this with \eqref{eq:F_h intermediate 1} and \eqref{eq:F_h intermediate 2}, we find
    \begin{equation*}
      F_h(v_{h,\varepsilon}) = F(u_m) + \mathcal{O}(\varepsilon^{-1}h) + \mathcal{O}(\varepsilon).
    \end{equation*}
    Choosing $\varepsilon = h^{\frac{1}{2}}$, this yields
    \begin{equation}\label{eq:min comparison}
      \min_{\substack{u\in H^1(S_h)\\ \abs{\nabla_{_{S_h}} u} \leq 1 \\u(b)=0}} F_h \leq \min_{\substack{u\in H^1(S)\\ \abs{\nabla_{_{S}} u} \leq 1 \\u(b)=0}} F +  \mathcal{O}(h^{\frac{1}{2}}).
    \end{equation}

    \emph{step 2.} Symmetrically, let $u_{m,h}$ the solution of the discrete
    problem  \eqref{eq:gradient constraint_h}, $u^l_h := u_{m,h} \circ (a_{|_{S_h}})^{-1}$ its lifted version on $S$, and $Lu_{m,h}^l :=  \frac{u^l_h}{{|\nabla_{_{S_h}} u^l_h|}_{L_\infty (S_h)}} $. We show as before, using the equation $u_{m,h} = u^l_h \circ a$, that $F(Lu_{m,h}^l) = F_h(u_{m,h}) +  \mathcal{O}(h)$.
    With \eqref{eq:min comparison}, this implies
    \begin{equation*}
      \min_{\substack{u\in H^1(S)\\ \abs{\nabla_{_{S}} u} \leq 1 \\u(b)=0}} F \leq F(Lu_{m,h}^l) \leq \min_{\substack{u\in H^1(S)\\ \abs{\nabla_{_{S}} u} \leq 1 \\u(b)=0}} F +  \mathcal{O}(h^{\frac{1}{2}}),
    \end{equation*}
    which concludes the proof of the proposition.
\end{proof}

We can now establish the convergence of the minimizers:
\begin{proposition}\label{prop:convergence minimizers}
  \begin{equation*}
    \abs{\nabla u_{m,h}^l - \nabla u_m}^2_{L^2(S)} = \mathcal{O}(h^{\frac{1}{2}}) \quad \text{and} \quad \abs{u_{m,h}^l - u_m}_{L^1(S)} = \mathcal{O}(h^{\frac{1}{2}}).
  \end{equation*}
\end{proposition}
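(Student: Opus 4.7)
The plan is to deduce both estimates from the energy convergence of Proposition~\ref{prop:values} by exploiting the strong convexity of the quadratic functional $F(u) = \int_S(\abs{\nabla_{_S} u}^2 - mu)$ on the convex admissible set $K := \{u \in H^1(S) : \abs{\nabla_{_S} u} \leq 1,\ u(b) = 0\}$. By construction $Lu_{m,h}^l$ lies in $K$, and Proposition~\ref{prop:values} gives $F(Lu_{m,h}^l) - F(u_m) = \mathcal{O}(h^{1/2})$.

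The core step is a quantitative strong-convexity inequality. Because $F$ is quadratic in $\nabla u$ up to the linear term in $u$, the parallelogram identity
\begin{equation*}
  F\!\left(\tfrac{u_m + Lu_{m,h}^l}{2}\right) = \tfrac{1}{2}\bigl(F(u_m) + F(Lu_{m,h}^l)\bigr) - \tfrac{1}{4}\int_S \abs{\nabla_{_S}(Lu_{m,h}^l - u_m)}^2
\end{equation*}
holds exactly. Convexity of $K$ places the midpoint in $K$, so minimality of $u_m$ forces the left-hand side to be at least $F(u_m)$, and rearranging yields
\begin{equation*}
  \abs{\nabla_{_S}(Lu_{m,h}^l - u_m)}_{L^2(S)}^2 \leq 2\bigl(F(Lu_{m,h}^l) - F(u_m)\bigr) = \mathcal{O}(h^{1/2}).
\end{equation*}
To pass from the normalized lift $Lu_{m,h}^l$ to $u_{m,h}^l$ itself, write $u_{m,h}^l = c_h\, Lu_{m,h}^l$ with $c_h := \abs{\nabla_{_S} u_{m,h}^l}_{L^\infty(S)}$. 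Lemma~\ref{lemma:differential of a} shows that the flat gradient on $S_h$ (which is bounded by $1$ by the discrete constraint) and its lift to $S$ agree up to a factor $1 + \mathcal{O}(h^2)$, so $c_h = 1 + \mathcal{O}(h^2)$ and the rescaling correction is of strictly lower order than $h^{1/2}$.

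For the $L^1$ estimate I would apply a Poincaré-type argument to $f := u_{m,h}^l - u_m$: combine Poincaré--Wirtinger $\abs{f - \bar f}_{L^2(S)} \lesssim \abs{\nabla_{_S} f}_{L^2(S)}$ with a separate control of $|\bar f|$ using that $f(b) = 0$ and that both $u_m$ and $u_{m,h}^l$ are Lipschitz with constant $1 + \mathcal{O}(h^2)$. The main technical obstacle is precisely this last step: in two dimensions $H^1 \not\hookrightarrow C^0$, so the pointwise vanishing at $b$ is not a standard $H^1$ trace and one must genuinely exploit the uniform Lipschitz regularity of the minimizers (together with the structure of the exponential chart at $b$, where the Jacobian of $\exp_b$ degenerates) to convert the $L^2$ gradient control into the announced $L^1$ rate.
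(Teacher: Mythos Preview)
Your treatment of the gradient estimate is exactly the paper's argument: the parallelogram identity for the quadratic part of $F$, minimality of $u_m$ over the convex set $K$, and Proposition~\ref{prop:values} give $\abs{\nabla_{_S}(Lu_{m,h}^l - u_m)}_{L^2(S)}^2 = \mathcal{O}(h^{1/2})$; then $c_h = 1 + \mathcal{O}(h^2)$ (via Lemma~\ref{lemma:differential of a}) removes the normalization. This part is correct and matches the paper.

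For the $L^1$ estimate you take a different route and run into the obstacle you yourself flag: the condition $f(b)=0$ is not an $H^1$ trace in two dimensions, and the uniform Lipschitz bound only yields $|f(x)| \le C\,d_b(x)$, which does not drive the mean $\bar f$ to zero with $h$. So this step does not close as written. The paper bypasses Poincar\'e entirely by a much more direct observation: expand
\[
  F(Lu_{m,h}^l) - F(u_m) \;=\; \int_S\bigl(\abs{\nabla_{_S} Lu_{m,h}^l}^2 - \abs{\nabla_{_S} u_m}^2\bigr) \;-\; m\int_S\bigl(Lu_{m,h}^l - u_m\bigr),
\]
and note that the left-hand side is $\mathcal{O}(h^{1/2})$ by Proposition~\ref{prop:values}, while the Dirichlet-energy difference on the right is controlled by the gradient $L^2$ estimate just established (both gradients have $L^\infty$ norm at most $1$). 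This isolates $\int_S(Lu_{m,h}^l - u_m)$ directly --- the linear term $-m\int_S u$ built into $F$ does all the work, and no Poincar\'e-type inequality is needed. (Strictly, this controls the signed integral; repeating the minimality argument with $\max(Lu_{m,h}^l,u_m)$ and $\min(Lu_{m,h}^l,u_m)$, which are also admissible, upgrades it to the $L^1$ norm.) The idea you are missing, then, is simply to reuse the structure of $F$ a second time rather than reach for a functional inequality.
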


\begin{proof} Consider $v = \frac12 (Lu_{m,h}^l + u_m)$. Then, $v$ is admissible for
   problem \eqref{eq:gradient constraint}, so $F(v)\geq F(u_m)$.
    Moreover, the following algebraic identity holds
$$
F(v) = \frac12 F(Lu_{m,h}^l) + \frac12 F(u_m) - \frac14 \int_S |\nabla_{_{S}} u_m - \nabla_{_{S}}  Lu_{m,h}^l|^2.
$$
Therefore, we have
$$
\frac12 F(Lu_{m,h}^l) - \frac12 F(u_m) \geq \frac14 \int_S |\nabla_{_{S}} u_m - \nabla_{_{S}} Lu_{m,h}^l|^2,
$$
which proves, with proposition \ref{prop:values}, that
\begin{equation}\label{eq:l2 gradient}
  \abs{\nabla Lu_{m,h}^l - \nabla u_m}^2_{L^2(S)}  = \mathcal{O}(h^{\frac{1}{2}}).
\end{equation}
Moreover, we have
\begin{equation*}
   F(Lu_{m,h}^l) -  F(u_m) = \int_S \left(\abs{\nabla Lu_{m,h}^l}^2 - \abs{\nabla u_{m}}^2\right) - m \int_S \left(Lu_{m,h}^l - u_m\right).
\end{equation*}
The last two equations imply
\begin{equation}\label{eq:l1 function}
  \abs{ Lu_{m,h}^l -  u_m}_{L^1(S)}  = \mathcal{O}(h^{\frac{1}{2}}).
\end{equation}
As in the proof of proposition \ref{prop:values}, using the relation $u_{m,h} = u_{m,h}^l \circ h$, we show that $Lu_{m,h}^l = u_{m,h}^l(1 + \mathcal{O}(h^2))$. Together with \eqref{eq:l2 gradient} and \eqref{eq:l1 function}, this concludes the proof.
\end{proof}
We just proved that the sequence of the lifted minimizers converges with an order
 at least $1/2$ to the minimizer of problem  \eqref{eq:gradient constraint}. By analogy with the more standard variational context \cite{demlow2009higher, dziuk2013finite},
  we expect a convergence of order $\mathcal{O}((h^r + h^{k+1})^{\frac{1}{2}})$
 using an approximation of orders $(r,k)$.

\subsection{Convergence in measure to the elastic set}
Let us recall that the set $E_{m,\lambda}$ is defined by
\begin{equation*}
  E_{m,\lambda}=\left\{x\in S \setminus\{b\} : \abs{\nabla_{_S} u_m(x)}^2 \leq 1 - \frac{\lambda^2}{u_m^2(x)}\right\}.
\end{equation*}

\begin{proposition}
For any $\lambda >0$ and $\varepsilon>0$ with $\varepsilon<\lambda/2$, let us define
\begin{equation*}
  E_{m,\lambda,h} := \left\{x\in S \setminus\{b\} : \abs{\nabla_{_S} u_{m,h}^l(x)}^2 \leq 1 - \frac{\lambda^2}{(u_{m,h}^l)^2(x)}\right\}.
\end{equation*}
Then, we have
\begin{equation*}
  \abs{E_{m,\lambda+\varepsilon}\setminus E_{m,\lambda,h}} = \mathcal{O}(h^{\frac{1}{2}}) \quad \text{and} \quad \abs{E_{m,\lambda,h}\setminus E_{m,\lambda-\varepsilon}} = \mathcal{O}(h^{\frac{1}{2}}).
\end{equation*}
\end{proposition}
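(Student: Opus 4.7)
The plan is to convert the strong $L^2$-gradient and $L^1$-function convergence from Proposition \ref{prop:convergence minimizers} into measure estimates on the symmetric differences $E_{m,\lambda+\varepsilon}\setminus E_{m,\lambda,h}$ and $E_{m,\lambda,h}\setminus E_{m,\lambda-\varepsilon}$, via Markov's inequality combined with a pointwise algebraic comparison off a ``bad'' set. Concretely, for a $\delta>0$ to be fixed small depending on $\lambda$, $\varepsilon$, and $M:=\diam(S)$, I would introduce
\begin{equation*}
A_h^1 := \{\abs{u_{m,h}^l - u_m} > \delta\}, \qquad A_h^2 := \{\abs{\nabla_{_S} u_{m,h}^l - \nabla_{_S} u_m}^2 > \delta^2\},
\end{equation*}
which by Markov and Proposition \ref{prop:convergence minimizers} satisfy $\abs{A_h^1}, \abs{A_h^2} = \mathcal{O}(h^{1/2})$.

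For the first inclusion I would take $x\in E_{m,\lambda+\varepsilon}\setminus(A_h^1\cup A_h^2)$ and use the admissibility bound $\abs{\nabla_{_S} u_m}\leq 1$, the lower bound $u_m(x)\geq \lambda+\varepsilon$ built into $E_{m,\lambda+\varepsilon}$, and the pointwise closeness off the bad set to obtain
\begin{equation*}
\abs{\nabla_{_S} u_{m,h}^l(x)}^2 \leq 1 - \frac{(\lambda+\varepsilon)^2}{u_m^2(x)} + 2\delta + \delta^2, \qquad \frac{\lambda^2}{(u_{m,h}^l)^2(x)} \leq \frac{\lambda^2}{(u_m(x) - \delta)^2}.
\end{equation*}
The key algebraic observation is that the gap $(\lambda+\varepsilon)^2/u_m^2 - \lambda^2/(u_m-\delta)^2$ is bounded below by a positive constant $c(\lambda,\varepsilon,M)>0$ once $\delta$ is small (at $\delta=0$ it equals $(2\lambda\varepsilon+\varepsilon^2)/u_m^2 \geq (2\lambda\varepsilon+\varepsilon^2)/M^2$, and this is stable under small perturbations in $\delta$). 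Choosing $\delta$ so that $2\delta+\delta^2 < c(\lambda,\varepsilon,M)$ then forces $x\in E_{m,\lambda,h}$, whence $E_{m,\lambda+\varepsilon}\setminus E_{m,\lambda,h} \subset A_h^1\cup A_h^2$ and the first estimate follows.

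The second estimate is obtained by the symmetric argument, exchanging the roles of $u_m$ and $u_{m,h}^l$: on $E_{m,\lambda,h}$ one has $u_{m,h}^l\geq \lambda$ and, off $A_h^2$, $\abs{\nabla_{_S} u_{m,h}^l}\leq 1+\delta$; the analogous comparison reduces to the positivity of $\lambda^2 - (\lambda-\varepsilon)^2 = 2\lambda\varepsilon - \varepsilon^2$, which is precisely where the assumption $\varepsilon<\lambda/2$ (in particular $\varepsilon<2\lambda$) is used to ensure a non-degenerate gap that can absorb the $\mathcal{O}(\delta)$ error.

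The main obstacle is precisely this pointwise algebraic comparison of the two constraint functions. It requires both the nondegeneracy $u_m\geq \lambda+\varepsilon$ on $E_{m,\lambda+\varepsilon}$ (respectively $u_{m,h}^l\geq\lambda$ on $E_{m,\lambda,h}$) and the uniform upper bound $u_m, u_{m,h}^l \leq \diam(S)$, which comes from $1$-Lipschitzness together with the boundary condition $u(b)=0$; the $\varepsilon$-buffer produces the strictly positive gap that absorbs the $\delta$-error. Once this algebraic step is secured, the passage from $L^p$ convergence to ``pointwise off a set of measure $\mathcal{O}(h^{1/2})$'' via Markov is routine.
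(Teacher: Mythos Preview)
Your proposal is correct and follows essentially the same approach as the paper: both arguments use the algebraic gap $(\lambda+\varepsilon)^2-\lambda^2=2\lambda\varepsilon+\varepsilon^2>0$ (resp.\ $\lambda^2-(\lambda-\varepsilon)^2>0$) together with the bounds $\lambda+\varepsilon\le u_m\le\diam S$ on $E_{m,\lambda+\varepsilon}$ to reduce the set difference to superlevel sets of $|u_{m,h}^l-u_m|$ and of the gradient discrepancy, and then apply Markov's inequality with Proposition~\ref{prop:convergence minimizers}. The only cosmetic difference is that the paper first derives the inclusion and then applies Markov, whereas you first excise the bad sets $A_h^1,A_h^2$ and then verify the pointwise inclusion on their complement; one small caveat is that $u_{m,h}^l$ is only $(1+\mathcal{O}(h^2))$-Lipschitz on $S$ rather than exactly $1$-Lipschitz, but this does not affect the argument.
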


\begin{proof}
By definition of $E_{m,\lambda}$ and $E_{m,\lambda,h}$, we have
\begin{equation*}
  E_{m,\lambda+\varepsilon}\setminus E_{m,\lambda,h} \subset \left\{\abs{\nabla u_{m,h}^l}^2 - \abs{\nabla u_m}^2 > \frac{(\lambda+\varepsilon)^2}{u_m^2} - \frac{\lambda^2}{(u_{m,h}^l)^2} \right\}.
\end{equation*}
Therefore, on $E_{m,\lambda+\varepsilon}\setminus E_{m,\lambda,h}$, we have
\begin{align*}
  \abs{\nabla u_{m,h}^l}^2 - \abs{\nabla u_m}^2
  & > \frac{(\lambda+\varepsilon)^2-\lambda^2}{u_m^2} + \lambda^2 (\frac{1}{u_m^2} - \frac{1}{(u_{m,h}^l)^2})
  \\
  & \geq \frac{2 \varepsilon \lambda + \varepsilon^2}{u_m^2} - \lambda^2 \frac{2}{\min(u_m,u_{m,h}^l)^3}\abs{u_m - u_{m,h}^l}
  \\
  & = \frac{2 \varepsilon \lambda + \varepsilon^2}{(\diam S)^2} - \lambda^2 \frac{2}{(u_m + \mathcal{O}(h^{\frac{1}{2}}))^3}\abs{u_m - u_{m,h}^l},
\end{align*}
where $\diam S$ is the diameter of $S$.
By definition of $E_{m,\lambda}$, we also have $E_{m,\lambda+\varepsilon} \subset \{u_m\geq (\lambda+\varepsilon)\}$, so on $E_{m,\lambda+\varepsilon}\setminus E_{m,\lambda,h}$,
\begin{align*}
  \abs{\nabla u_{m,h}^l}^2 - \abs{\nabla u_m}^2
  > \frac{2 \varepsilon \lambda + \varepsilon^2}{(\diam S)^2} - \lambda^2 \frac{2}{(\lambda + \varepsilon + \mathcal{O}(h^{\frac{1}{2}}))^3}\abs{u_m - u_{m,h}^l}.
\end{align*}
So for $h$ big enough, we have
\begin{align*}
  &E_{m,\lambda+\varepsilon}\setminus E_{m,\lambda,h}
  \\
  &\quad \subset \left\{\abs{\nabla u_{m,h}^l}^2 - \abs{\nabla u_m}^2 + 2 \lambda \abs{u_m - u_{m,h}^l} > \frac{2 \varepsilon \lambda + \varepsilon^2}{(\diam S)^2} \right\}.
  \\
  &\quad \subset \left\{\abs{\nabla u_{m,h}^l}^2 - \abs{\nabla u_m}^2 > \frac{2 \varepsilon \lambda + \varepsilon^2}{2(\diam S)^2} \right\} \cup \left\{\abs{u_m - u_{m,h}^l} > \frac{2 \varepsilon \lambda + \varepsilon^2}{4 \lambda (\diam S)^2} \right\}.
\end{align*}
Now from proposition \ref{prop:convergence minimizers}, we know that for any $\eta>0$, we have the following estimates
\begin{equation*}
  \eta\abs{\left\{\abs{\abs{\nabla u_{m,h}^l}^2 - \abs{\nabla u_m}^2} > \eta \right\}}\leq \int_S \abs{\abs{\nabla u_{m,h}^l}^2 - \abs{\nabla u_m}^2} = \mathcal{O}(h^{\frac{1}{2}}),
\end{equation*}
and
\begin{equation*}
  \eta\abs{\left\{\abs{u_m - u_{m,h}^l} > \eta \right\}}\leq \int_S \abs{u_m - u_{m,h}^l} = \mathcal{O}(h^{\frac{1}{2}}).
\end{equation*}
This gives the estimate $\abs{E_{m,\lambda+\varepsilon}\setminus E_{m,\lambda,h}} = \mathcal{O}(h^{\frac{1}{2}})$. The other estimate is proved by the same method.
\end{proof}

\begin{remark}
  We expect a convergence of order $\mathcal{O}((h^r + h^{k+1})^{\frac{1}{2}})$
 using an approximation of orders $(r,k)$.
\end{remark}

Sections \ref{sec:lambda cut locus}, \ref{sec:variational problem} and \ref{sec:discretization} together justify that the set $E_{m,\lambda,h}$ is a good approximation of the cut locus of $b$ in $M$, if $m$ is big enough, and $\lambda$ and $h$ are small enough.


\section{Numerical illustrations}\label{sec:numerical}

\subsection{Cut locus approximation}

We established the convergence of the minimizers of solutions of problems
\eqref{eq:gradient constraint} when $h$ tends to $0$. For a fixed $h>0$,
this convex discrete problems is of quadratic type with an infinite
 number of conic pointwise constraints.
By the way, it is important to observe that  for $k=r=1$, the gradient pointwise
bounds for a function of $\PP^1$ is
equivalent to a single discrete conic constraint on every triangle with respect
to the degrees of freedom of  $\PP^1(\calTT_h)$.

Nevertheless, we observed in our experiments that using  $\PP^1$ elements may lead to
approximated cut loci with some tiny artificial connected components. Motivated by
 this lack of precision, we use in all following illustrations elements of order
  $r >1$.


 For the general case $r>1$, the bound constraint on the gradient can not be
 easily reduced to a finite set of discrete constraints. In  our experiments, we
approximated the constraint
$| \nabla_{_{S_h^k}} u|_{L^\infty (S_h^k)} \leq 1$  by forcing this constraint only on a
finite number of points of the mesh. In practice, we imposed these constraints
on the Gauss quadrature points of order $g$ on every triangle of $\calTT_h$.

We illustrate in figures \ref{fig:f1}, \ref{fig:f2}, \ref{fig:f3} and \ref{fig:f4}
the approximation of  the cut locus provided by our approach. These computations have been
carried out on meshes of approximated $10^5$ triangles for $k = 2$ and $r = 3$
 using high precision quadrature formula associated to $17$ Gauss points on every element
  of the mesh. Moreover, for $r=3$, we imposed the conic gradient constraints on
  the $g = 9$ Gauss points of every triangle.  In order to solve the resulting
  linear conic constrained quadratic optimization problem,
   we used the \emph{JuMP} modeling language and the finite elements library \emph{Getfem++  }
   \cite{JuMP.jl-2017, renard2006getfem++} combined with \emph{Mosek} optimization
   solver \cite{andersen2000mosek}. For such a precision,  the optimization
   solver identified a solution in less than one hour on a standard computer.


\begin{figure}[htbp]
  \centering
  \begin{tabular}{c c c}
    \includegraphics[width=\widthfig cm]{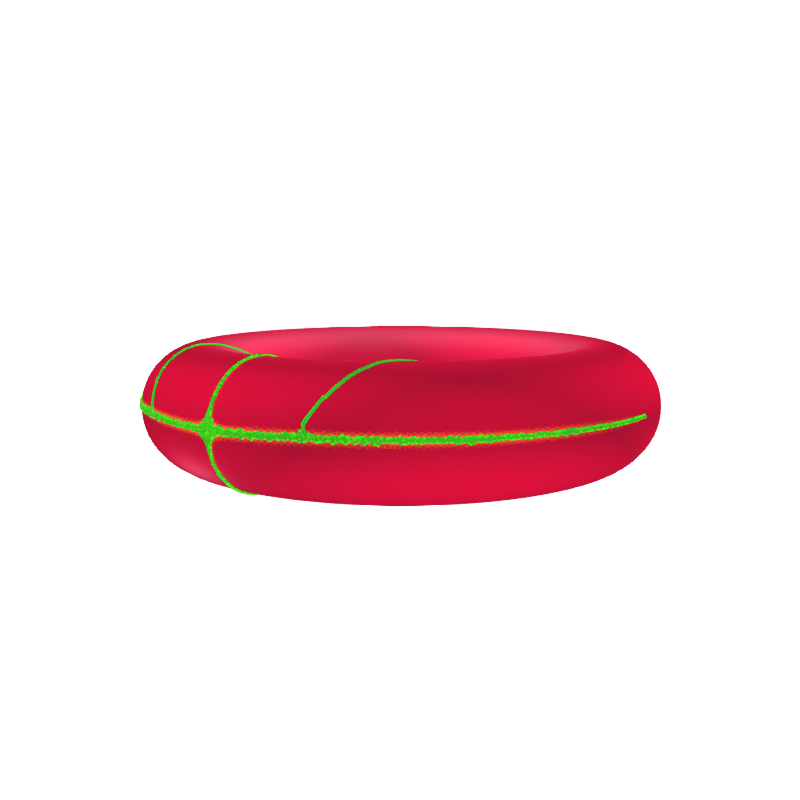}
    & \includegraphics[width=\widthfig cm]{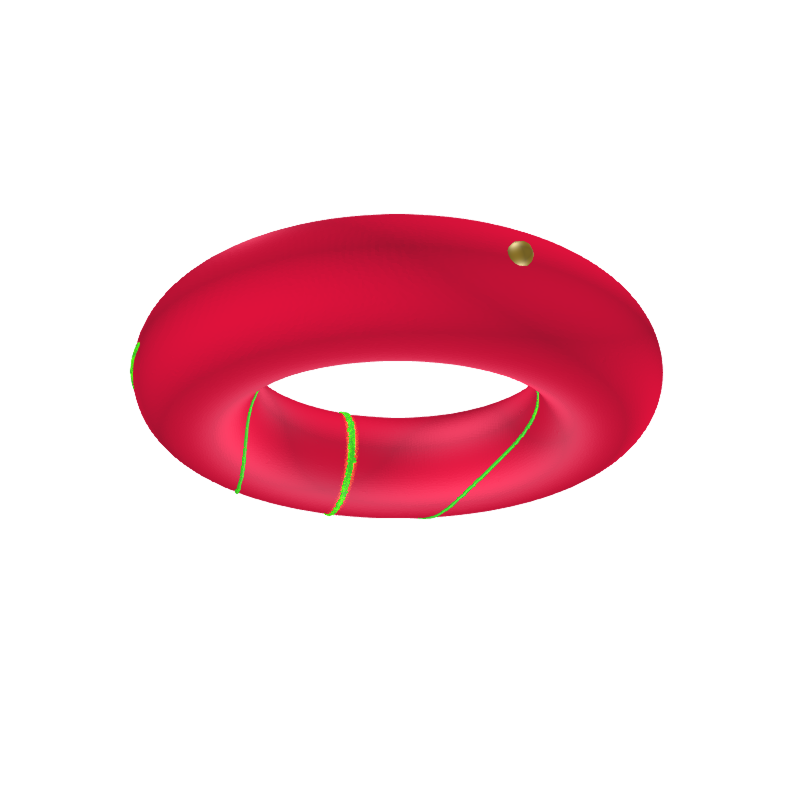}
    & \includegraphics[width=\widthfig cm]{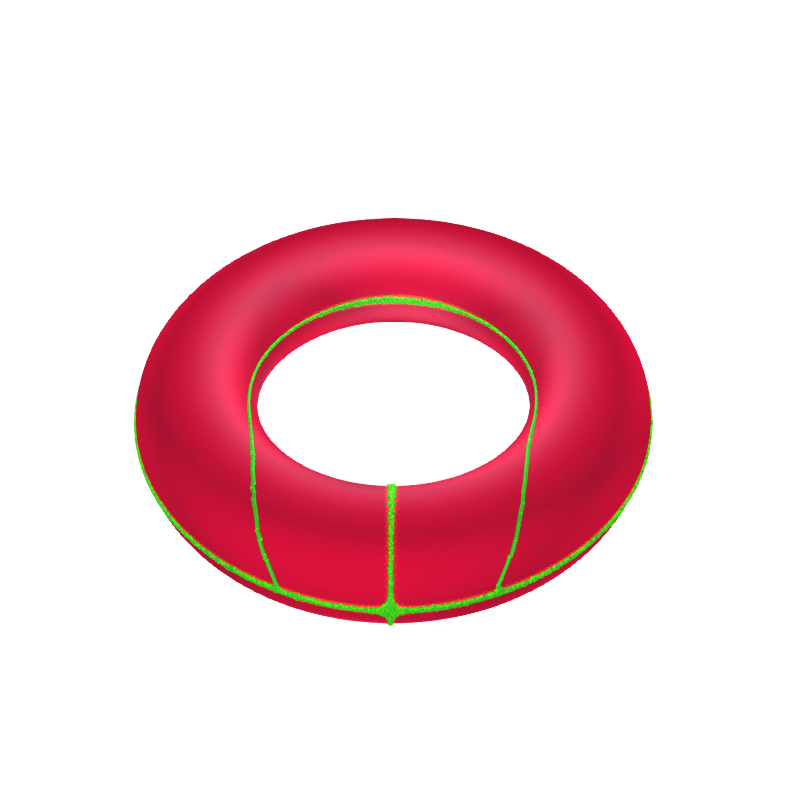}
  \end{tabular}
  \caption{Three different views of the approximation of a cut locus on a standard torus}
  \label{fig:f1}
\end{figure}

\begin{figure}[htbp]
  \begin{tabular}{c c c}
    \includegraphics[width=\widthfig cm]{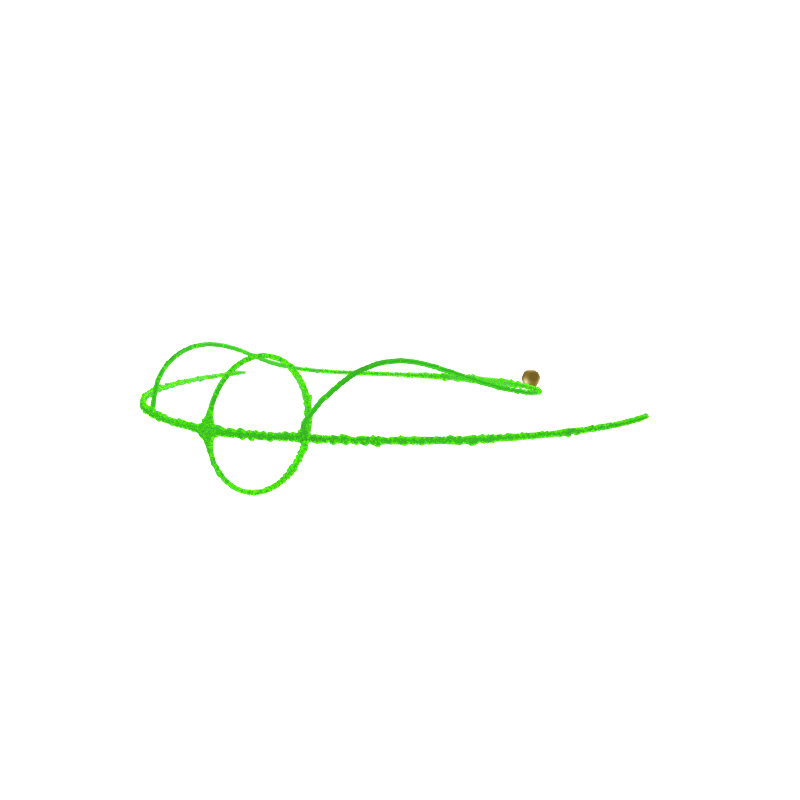}
    & \includegraphics[width=\widthfig cm]{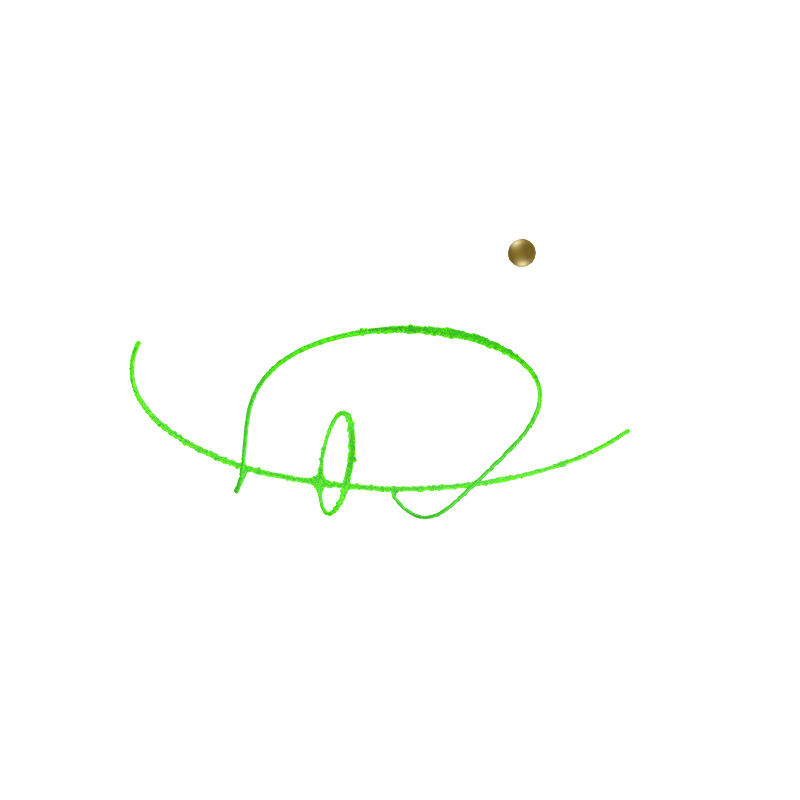}
    & \includegraphics[width=\widthfig cm]{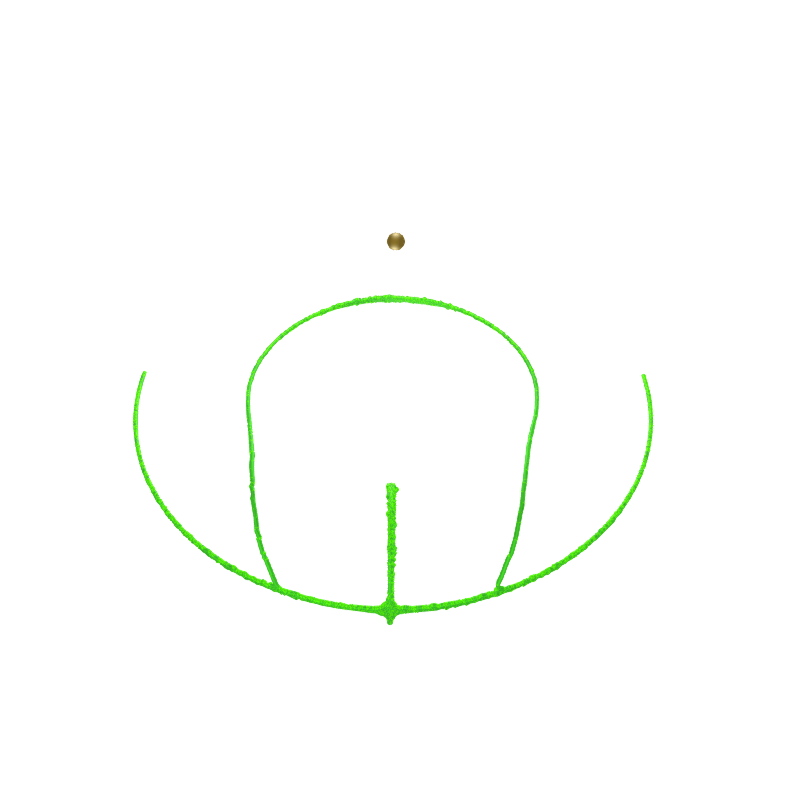}
  \end{tabular}
  \caption{Three different views of the approximation of a cut locus on a standard torus,
           without representing the surface}
  \label{fig:f2}
\end{figure}

\begin{figure}[htbp]
  \centering
  \begin{tabular}{ccc}
    \includegraphics[width=\widthfig cm]{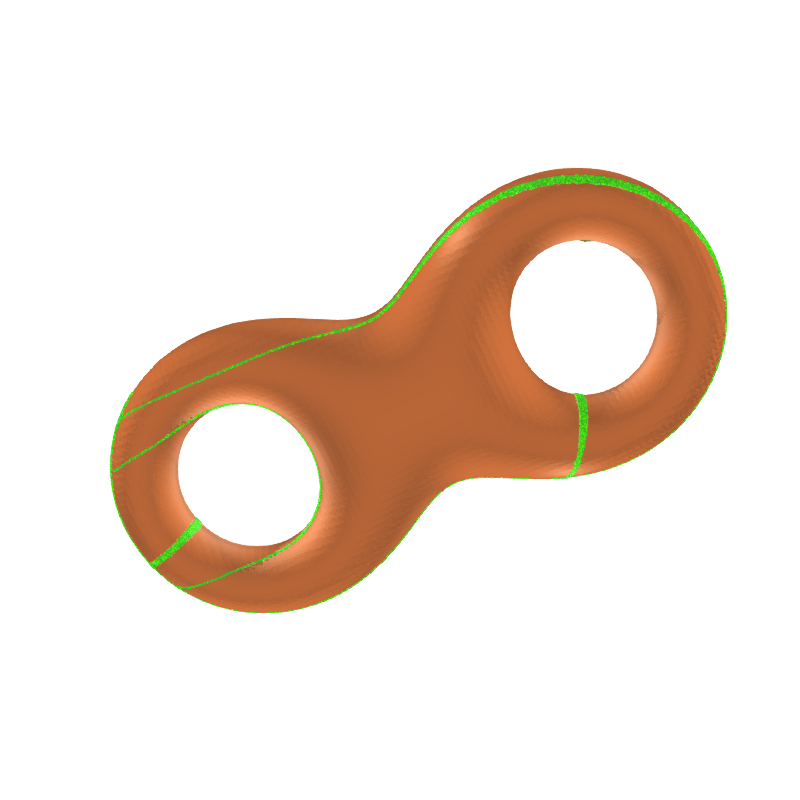}
    & \includegraphics[width=\widthfig cm]{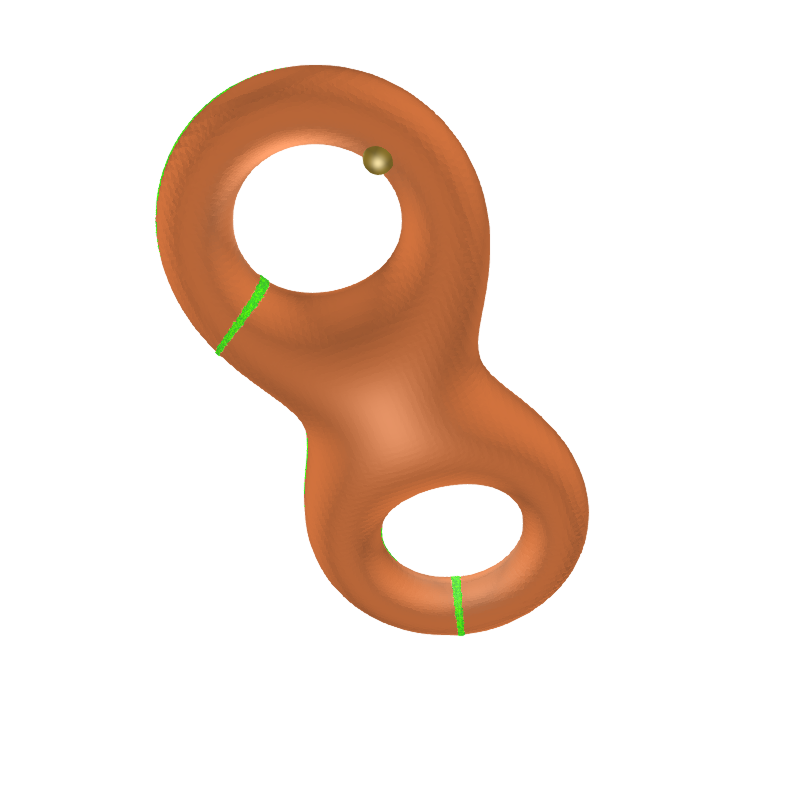}
    & \includegraphics[width=\widthfig cm]{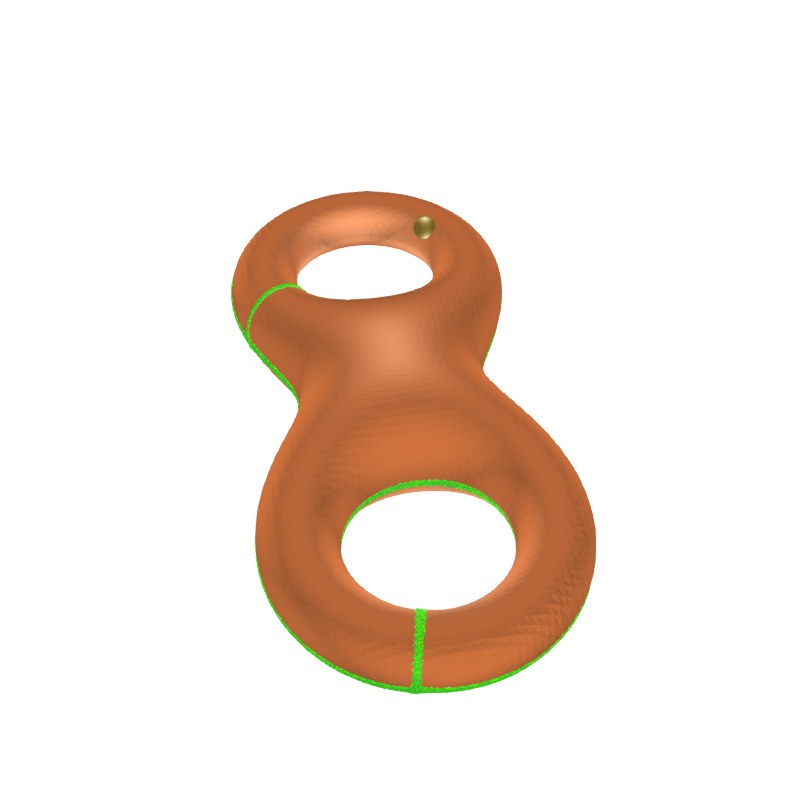} \\
  \end{tabular}
  \caption{Three different views of the approximation of a cut locus on a torus of genus $2$}
  \label{fig:f3}
\end{figure}

\begin{figure}[htbp]
  \begin{tabular}{ccc}
    \includegraphics[width=\widthfig cm]{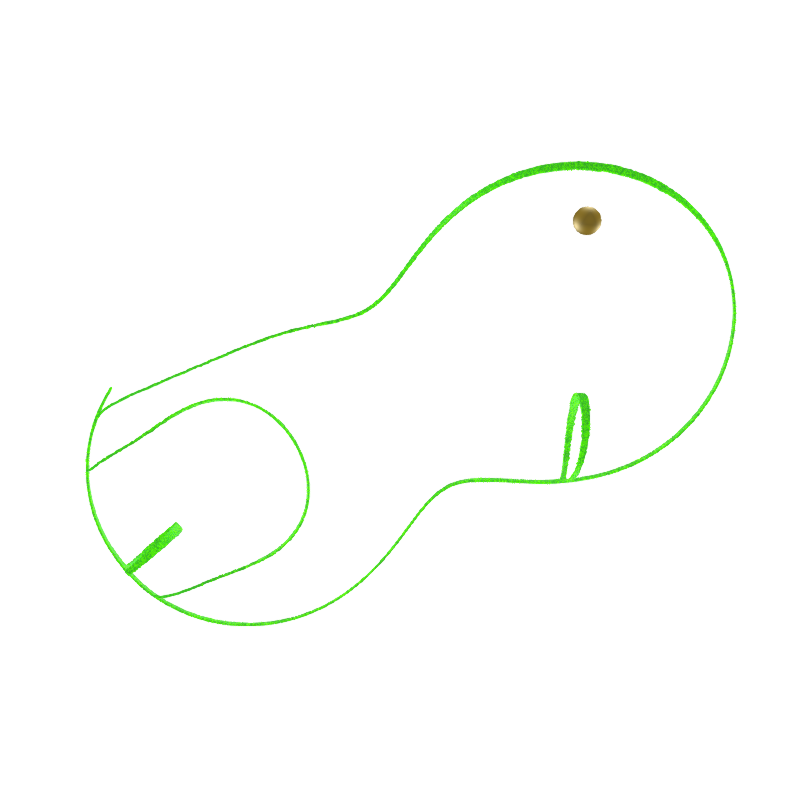}
    & \includegraphics[width=\widthfig cm]{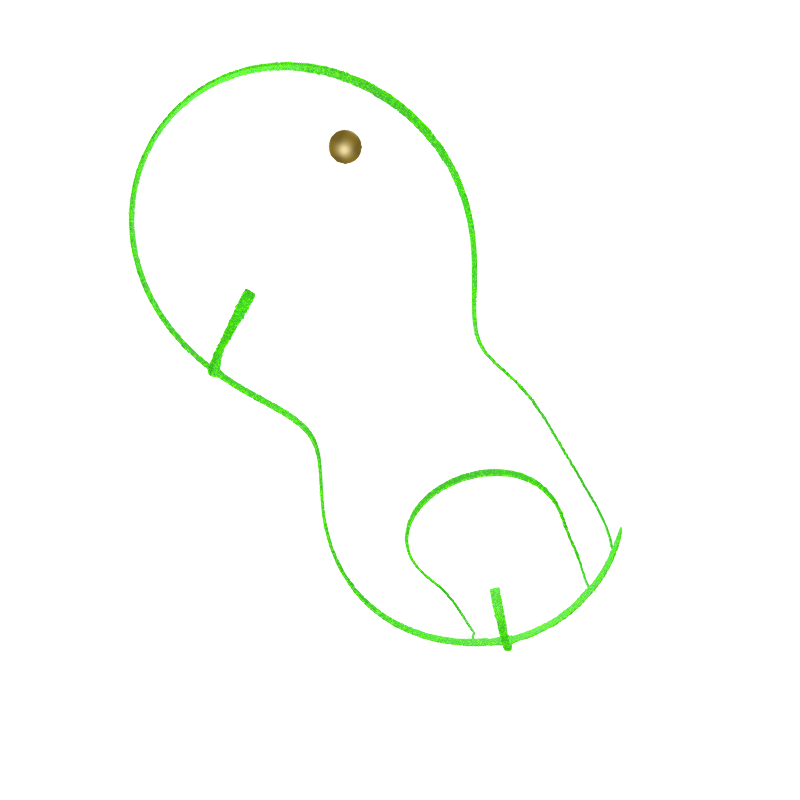}
    & \includegraphics[width=\widthfig cm]{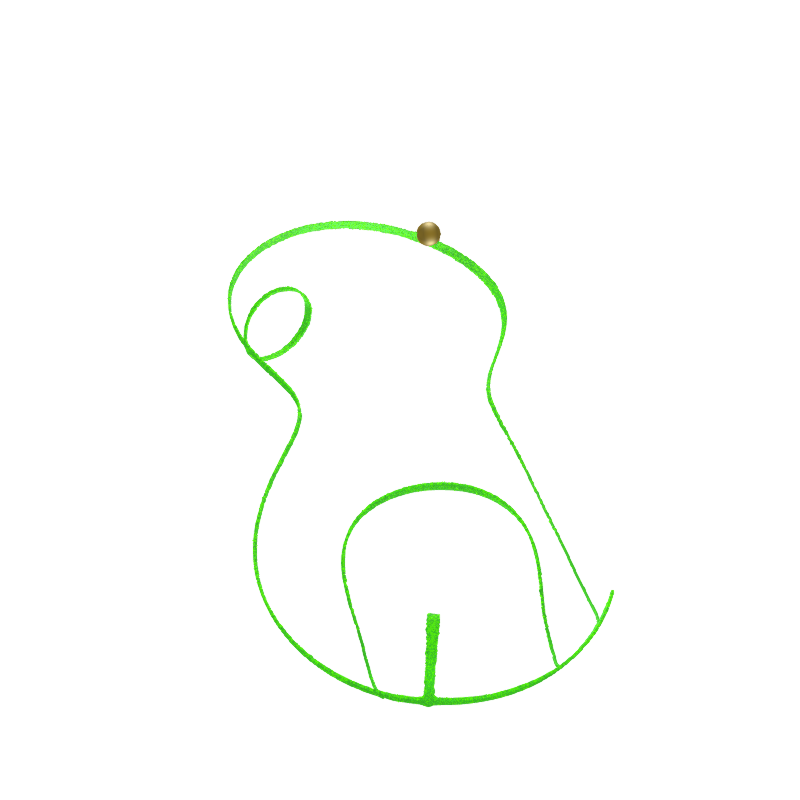}
  \end{tabular}
  \caption{Three different views of the approximation of a cut locus on a torus of genus $2$,
           without representing the surface}
  \label{fig:f4}
\end{figure}
\FloatBarrier

\subsection{Approximation of the boundary of Voronoi cells}

All previous theoretical results still hold if we replace the source point $b$ by any
compact subset of the surface $S$. For instance, if $b$ is replaced by a set of points,
the singular set of the distance function can be decomposed as the union of the
boundary of voronoi cells and the cut loci of every point intersected with its
voronoi cell. As a consequence, if the distribution of source points is homogeneous enough, that is every voronoi cell is small enough, the singular part
of the distance function will be exactly equal to the boundary of the voronoi cells.
We illustrate this remark in the following experiments. We used exactly the
same framework as in previous sections and just replaced the pointwise condition
 at $b$ with the analogous pointwise Dirichlet conditions at every source point.
Figure \ref{fig:vortorus1} and \ref{fig:vortorus2} represent the voronoi
 diagrams obtained with $10$, $30$ and $100$ points for surfaces of genus $2$ and $3$.
  The expected computational complexity is exactly of the same order as with
   a single source point.

\begin{figure}[htbp]
  \begin{tabular}{ccc}
    \includegraphics[width=\widthvoronoi cm]{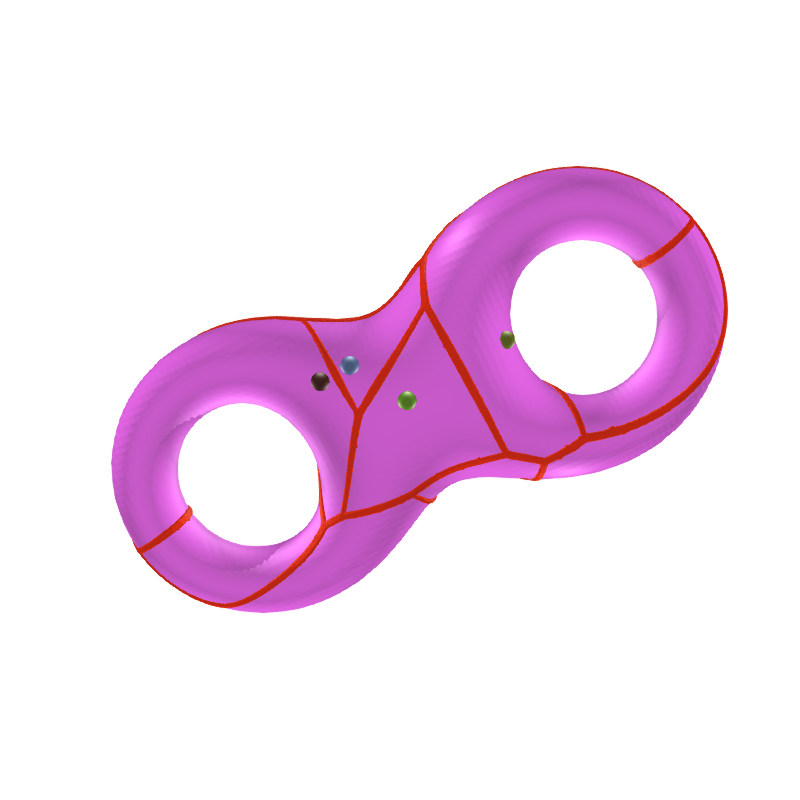} &
    \includegraphics[width=\widthvoronoi cm]{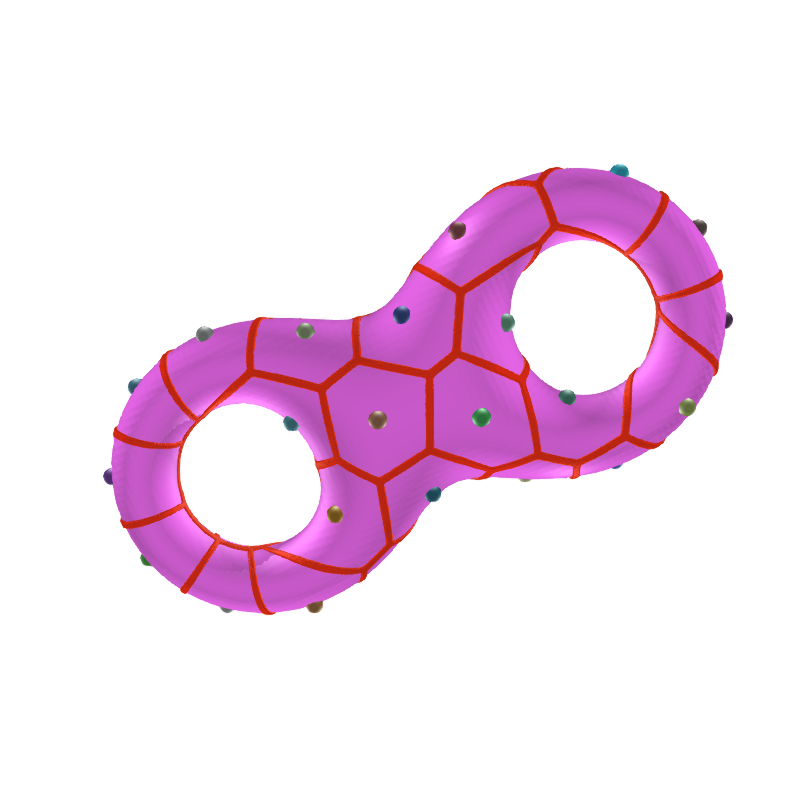} &
    \includegraphics[width=\widthvoronoi cm]{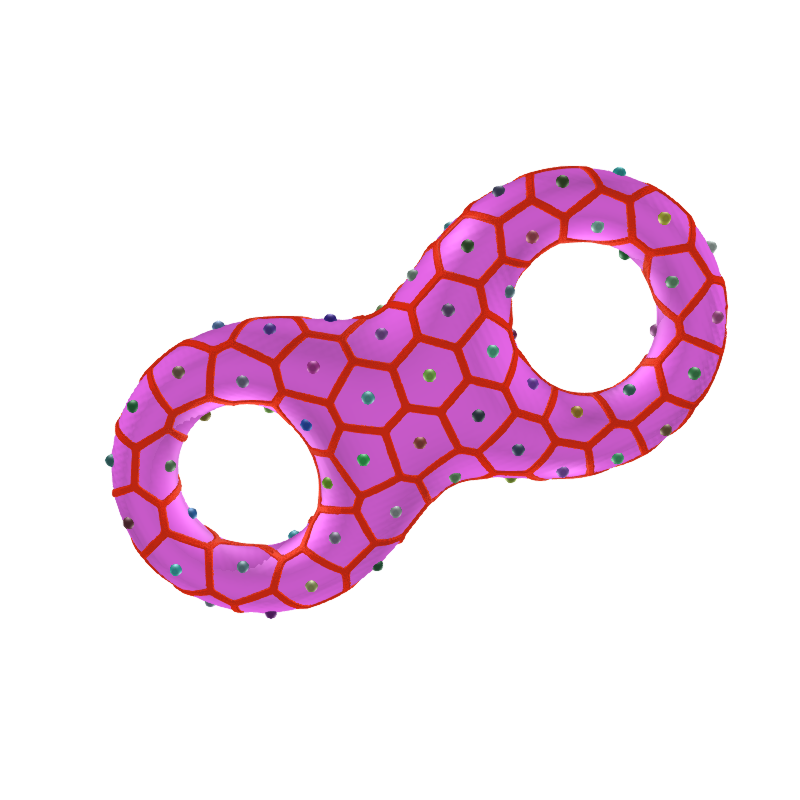}\\
    \includegraphics[width=\widthvoronoi cm]{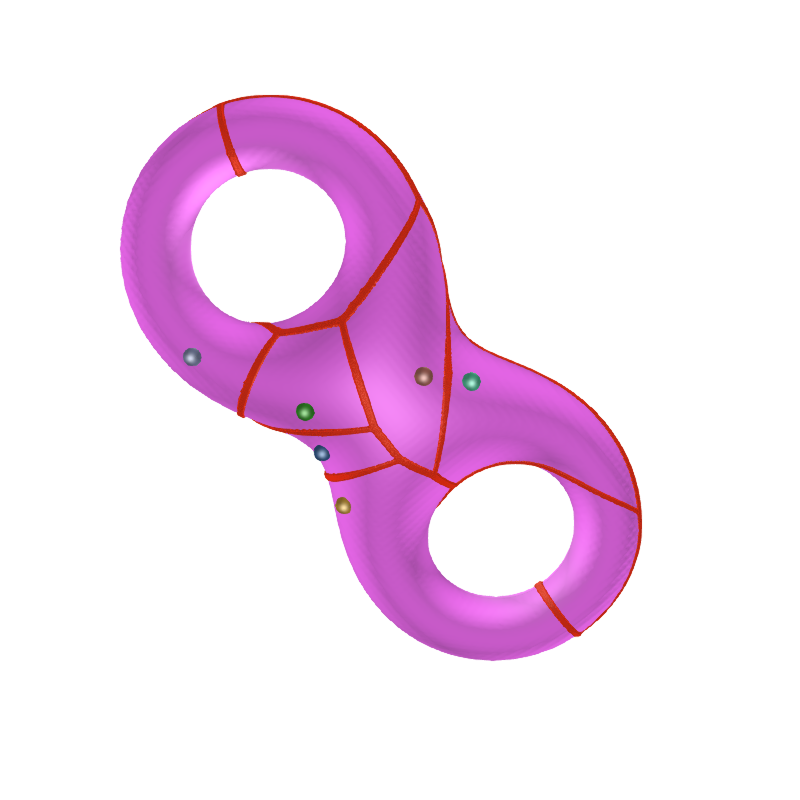} &
    \includegraphics[width=\widthvoronoi cm]{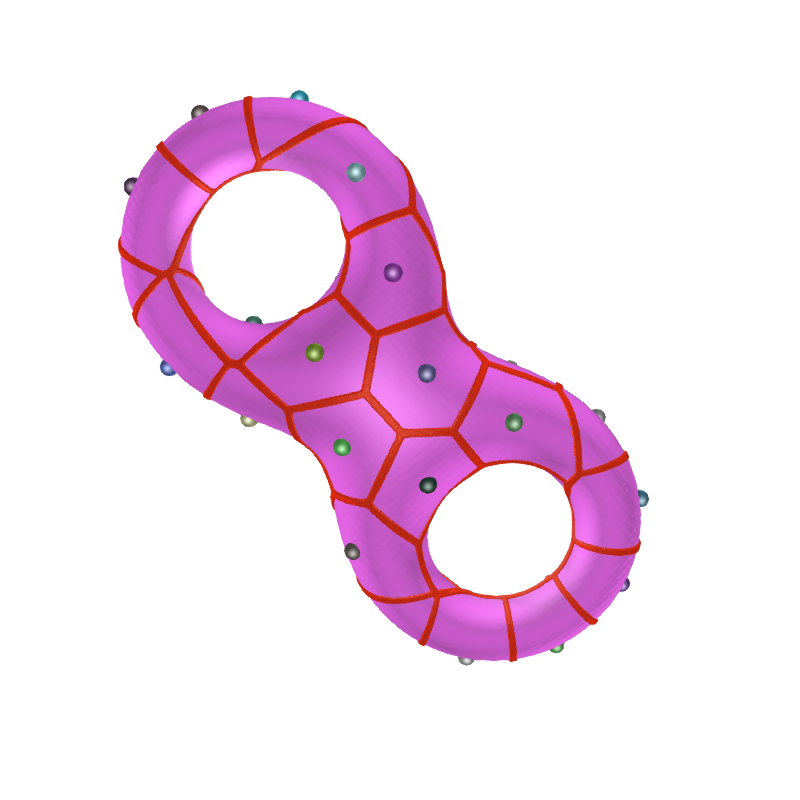} &
    \includegraphics[width=\widthvoronoi cm]{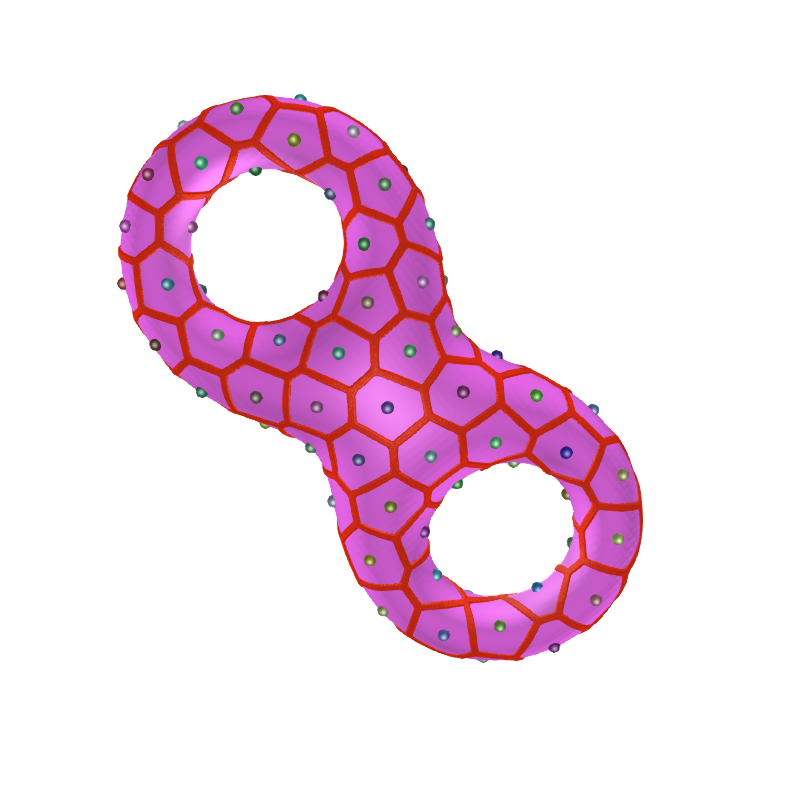}
  \end{tabular}
  \caption{Approximation of the voronoi cells on a torus of genus $2$ of $10$, $30$ and $100$ points. Every column represent two different views}
  \label{fig:vortorus1}
\end{figure}

\begin{figure}[htbp]
  \begin{tabular}{ccc}
    \includegraphics[width=\widthvoronoi cm]{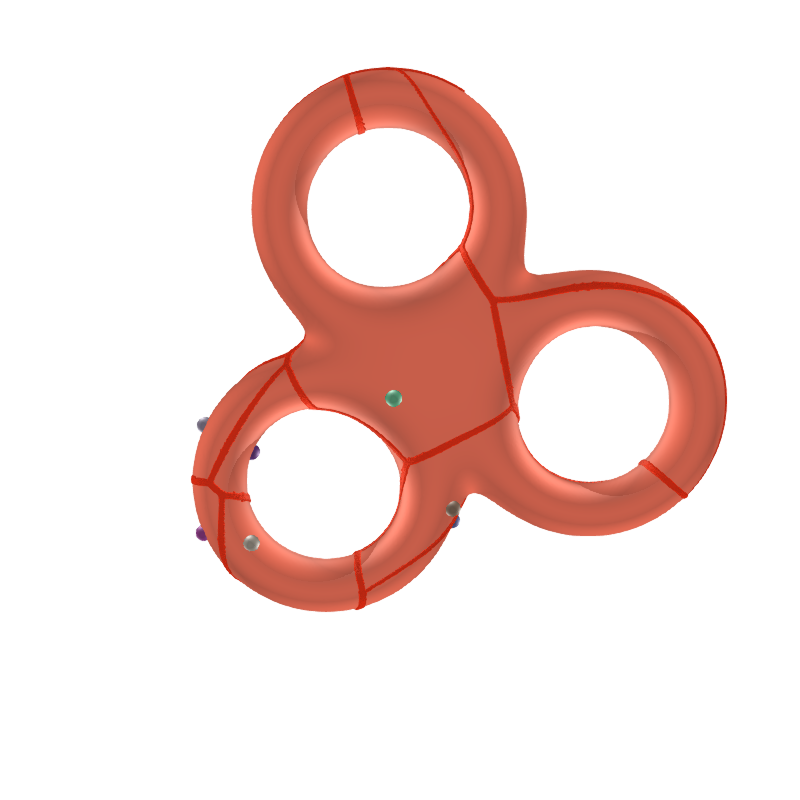} &
    \includegraphics[width=\widthvoronoi cm]{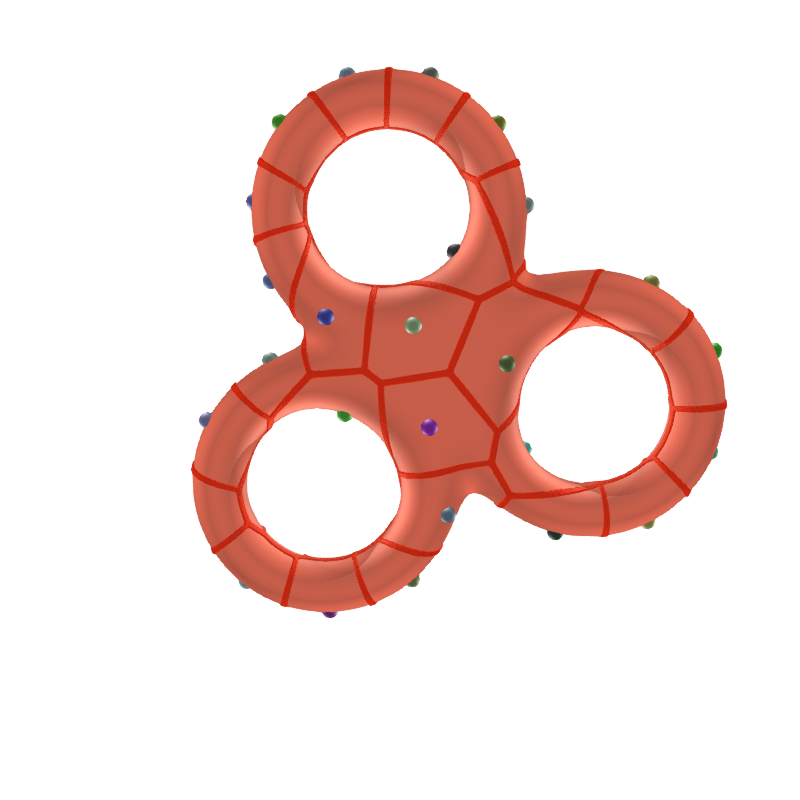} &
    \includegraphics[width=\widthvoronoi cm]{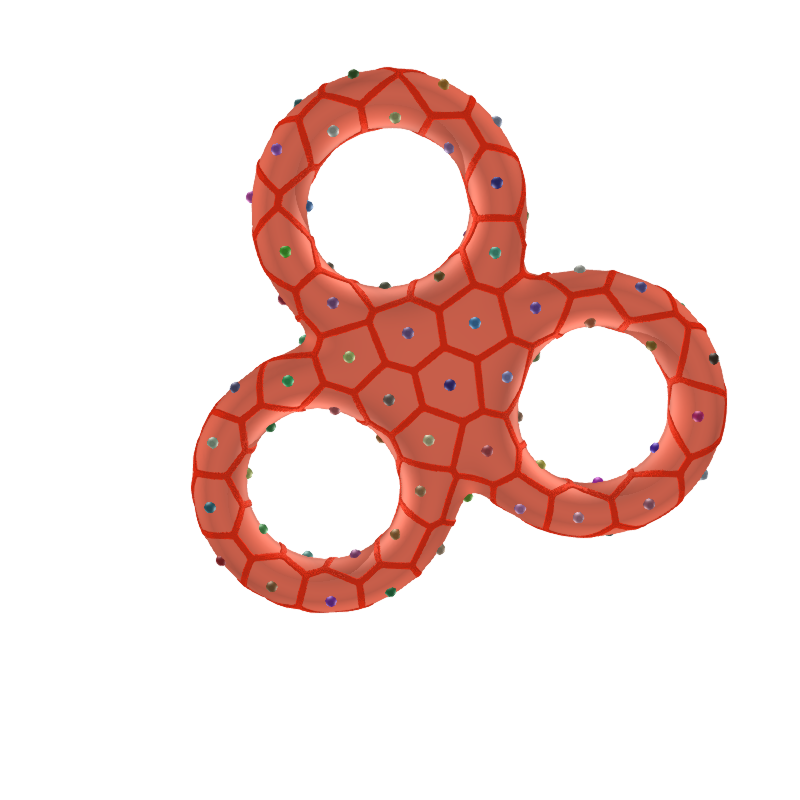}\\
    \includegraphics[width=\widthvoronoi cm]{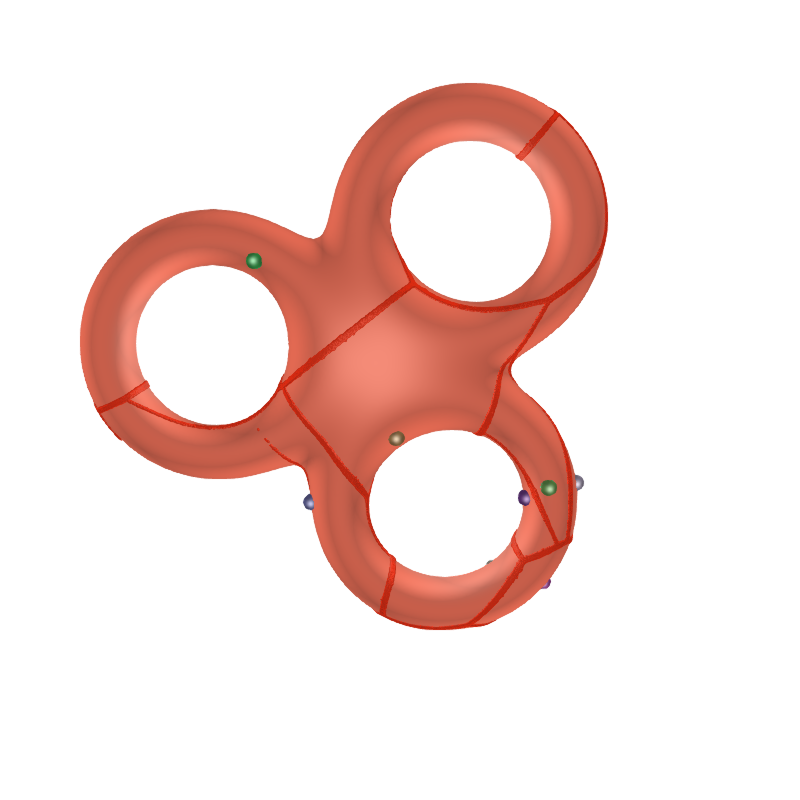} &
    \includegraphics[width=\widthvoronoi cm]{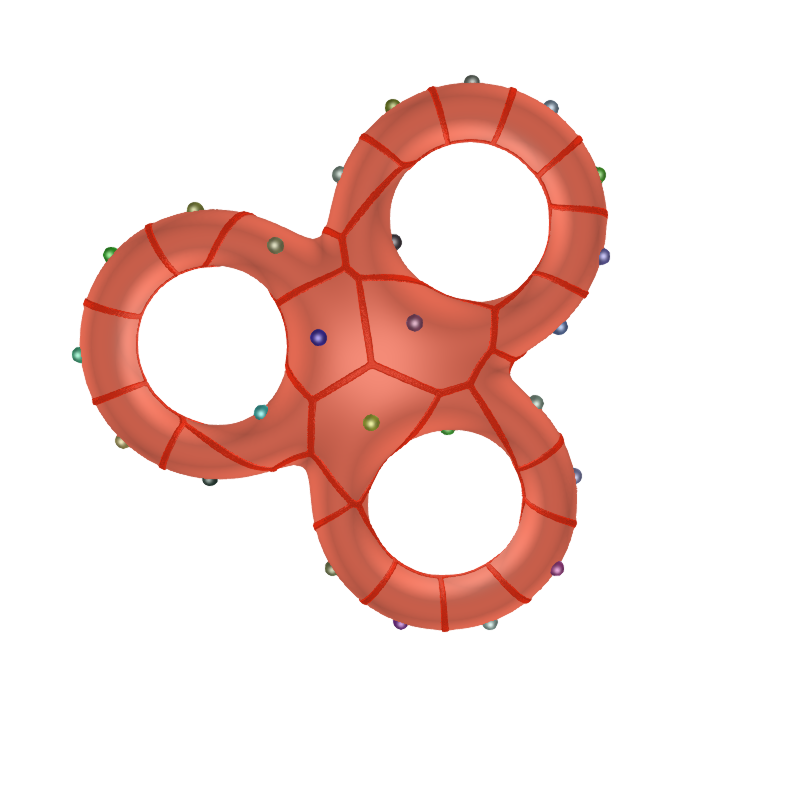} &
    \includegraphics[width=\widthvoronoi cm]{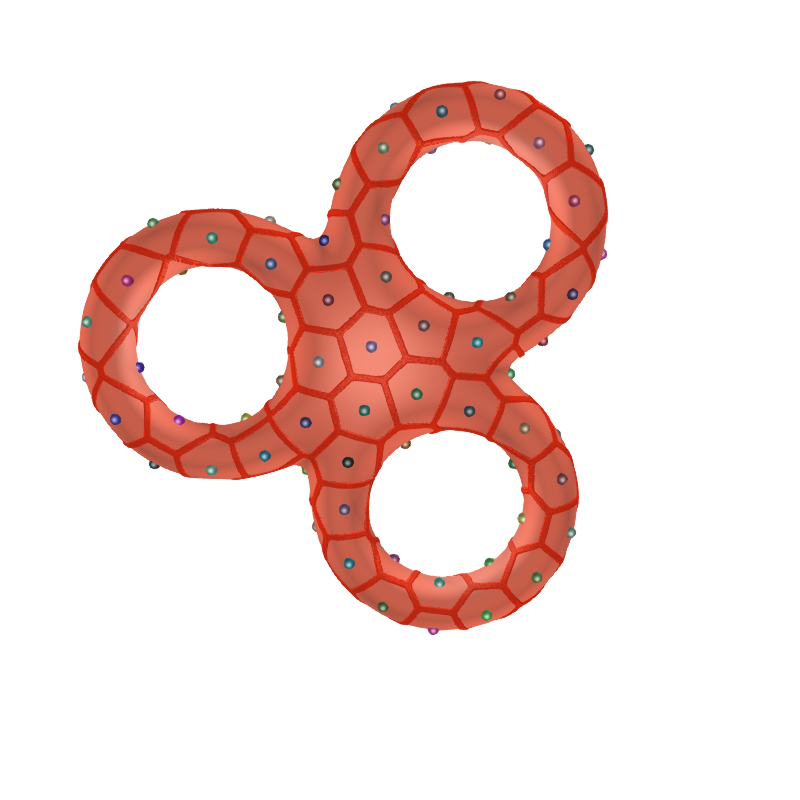}
  \end{tabular}
  \caption{Approximation of the voronoi cells on a torus of genus $3$ of $10$, $30$
  and $100$ points. Every column represent two different views}
  \label{fig:vortorus2}
\end{figure}

\FloatBarrier

%
\bibliographystyle{plain}
\bibliography{../../biblio/biblio.bib}

\begin{thebibliography}{10}

\bibitem{albano2016stability}
Paolo Albano.
\newblock On the stability of the cut locus.
\newblock {\em Nonlinear Analysis: Theory, Methods \& Applications},
  136:51--61, 2016.

\bibitem{andersen2000mosek}
Erling~D. Andersen and Knud~D. Andersen.
\newblock The mosek interior point optimizer for linear programming: an
  implementation of the homogeneous algorithm.
\newblock In {\em High performance optimization}, pages 197--232. Springer,
  2000.

\bibitem{bonnard2014geometric_cutlocus}
Bernard Bonnard, Olivier Cots, and Lionel Jassionnesse.
\newblock Geometric and numerical techniques to compute conjugate and cut loci
  on riemannian surfaces.
\newblock In {\em Geometric Control Theory and Sub-Riemannian Geometry}, pages
  53--72. Springer, 2014.

\bibitem{buchner1977simplicial}
Michael~A. Buchner.
\newblock Simplicial structure of the real analytic cut locus.
\newblock {\em Proceedings of the American Mathematical Society},
  64(1):118--121, 1977.

\bibitem{chazal_lieutier_lambda}
Fr{\'e}d{\'e}ric Chazal and Andr{\'e} Lieutier.
\newblock The “$\lambda$-medial axis”.
\newblock {\em raphical Models}, 67(4):304--331, 2005.

\bibitem{demlow2009higher}
Alan Demlow.
\newblock Higher-order finite element methods and pointwise error estimates for
  elliptic problems on surfaces.
\newblock {\em SIAM Journal on Numerical Analysis}, 47(2):805--827, 2009.

\bibitem{dey2009cut}
Tamal~K. Dey and Kuiyu Li.
\newblock Cut locus and topology from surface point data.
\newblock In {\em Proceedings of the twenty-fifth annual symposium on
  Computational geometry}, pages 125--134. ACM, 2009.

\bibitem{JuMP.jl-2017}
Iain Dunning, Joey Huchette, and Miles Lubin.
\newblock Jump: A modeling language for mathematical optimization.
\newblock {\em SIAM Review}, 59(2):295--320, 2017.

\bibitem{dziuk2013finite}
Gerhard Dziuk and Charles~M Elliott.
\newblock Finite element methods for surface pdes.
\newblock {\em Acta Numerica}, 22:289--396, 2013.

\bibitem{generauCutLocusCompact2020}
Fran{\c c}ois G{\'e}n{\'e}rau, Edouard Oudet, and Bozhidar Velichkov.
\newblock Cut locus on compact manifolds and uniform semiconcavity estimates
  for a variational inequality.
\newblock {\em arXiv:2006.07222 [math]}, June 2020.

\bibitem{itoh2004thaw}
Jin-Ichi Itoh and Robert Sinclair.
\newblock Thaw: A tool for approximating cut loci on a triangulation of a
  surface.
\newblock {\em Experimental Mathematics}, 13(3):309--325, 2004.

\bibitem{mantegazza_mennucci_2003}
C.~Mantegazza and A.~Mennucci.
\newblock Hamilton\textemdash{{Jacobi Equations}} and {{Distance Functions}} on
  {{Riemannian Manifolds}}.
\newblock {\em Applied Mathematics \& Optimization}, 47(1):1--25, December
  2002.

\bibitem{misztal2011cut}
Marek~Krzysztof Misztal, Jakob~Andreas B{\ae}rentzen, Francois Anton, and Steen
  Markvorsen.
\newblock Cut locus construction using deformable simplicial complexes.
\newblock In {\em 2011 Eighth International Symposium on Voronoi Diagrams in
  Science and Engineering}, pages 134--141. IEEE, 2011.

\bibitem{myers1936connections}
Sumner~Byron Myers et~al.
\newblock Connections between differential geometry and topology ii. closed
  surfaces.
\newblock {\em Duke Mathematical Journal}, 2(1):95--102, 1936.

\bibitem{renard2006getfem++}
Yves Renard and Julien Pommier.
\newblock Getfem++.
\newblock {\em An open source generic C++ library for finite element methods
  (http://home. gna. org/getfem)}, 2006.

\bibitem{sinclair2002loki}
Robert Sinclair and Minoru Tanaka.
\newblock Loki: Software for computing cut loci.
\newblock {\em Experimental Mathematics}, 11(1):1--25, 2002.

\end{thebibliography}

\bigskip\noindent
François Générau:
Laboratoire Jean Kuntzmann (LJK),
Universit\'e Joseph Fourier\\
Bâtiment IMAG, 700 avenue centrale,
38041 Grenoble Cedex 9 - FRANCE\\
{\tt francois.generau@univ-grenoble-alpes.fr}

\bigskip\noindent
\'Edouard Oudet:\\
Laboratoire Jean Kuntzmann (LJK),
Universit\'e Grenoble Alpes\\
Bâtiment IMAG, 700 avenue centrale,
38041 Grenoble Cedex 9 - FRANCE\\
{\tt edouard.oudet@univ-grenoble-alpes.fr}

\bigskip\noindent
Bozhidar Velichkov:\\
Dipartimento di Matematica, Universit\`a di Pisa\\
Largo Bruno Pontecorvo, 5, 56127 Pisa - ITALY\\
{\tt bozhidar.velichkov@gmail.com}

\end{document}